\newcommand{\ovl}{\overline}
\newcommand{\vp}{\varepsilon}
\newcommand{\cl}[1]{{\mathcal{#1}}}
\newcommand{\bb}[1]{{\mathbb{#1}}}
\numberwithin{equation}{section}
\theoremstyle{plain}
\newtheorem{lem}{Lemma}[section]
\newtheorem{thm}[lem]{Theorem}
\newtheorem{cor}[lem]{Corollary}
\theoremstyle{definition}
\theoremstyle{remark}
\newtheorem{rem}[lem]{Remark}
\begin{document}

\title{THE CARPENTER AND SCHUR--HORN PROBLEMS FOR MASAS IN FINITE FACTORS}

\date{\today}

\author{Kenneth J. Dykema \thanks{Partially supported by NSF grant
DMS-0901220}\and Junsheng Fang \thanks{Partially supported by the Fundamental
Research Funds for
the Central Universities of China and NSFC(11071027)}\and Donald W.
Hadwin \and Roger
R. Smith \thanks{Partially supported by NSF grant
DMS-1101403}}

\maketitle

\begin{abstract}
Two classical theorems in matrix theory, due to Schur and Horn, relate the
eigenvalues of a self-adjoint
matrix to the diagonal entries. These have recently been given a formulation in
the setting of operator algebras as the
Schur-Horn problem, where matrix algebras and diagonals are replaced
respectively by finite factors and
maximal abelian self-adjoint subalgebras (masas). There is a special case of the
problem, called the carpenter
problem, which can be stated as follows: for a masa $A$ in a finite factor $M$
with conditional expectation
$\mathbb{E}_A$, can each $x\in A$ with $0\leq x\leq 1$ be expressed as
$\mathbb{E}_A(p)$ for a projection
$p\in M$?

In this paper, we investigate these problems for various masas. We give positive
solutions for the generator
and radial masas in free group factors, and we also solve affirmatively a weaker
form of the
Schur-Horm problem for the
Cartan masa in the hyperfinite factor.
\end{abstract}

\section{Introduction}\label{sec1}

\indent 

Two classical theorems due to Schur \cite{Sch} and Horn \cite{Ho}, which relate
the diagonal entries of an $n\times n$ self-adjoint matrix to its eigenvalues,
have recently been reformulated in the setting of type $\text{II}_1$ factors $M$
with normalized trace $\tau$ \cite{A-K}. A special case of the problem, termed
the carpenter problem in \cite{Ka1,Ka2}, asks whether each element $x$ in a masa
$A\subseteq M$ satisfying $0\le x\le 1$ can be expressed as ${\bb E}_A(p)$ for
some projection $p\in M$. This entails $\tau(x) = \tau(p)$, so the analogous problem
in complex matrix algebras
places a constraint on the value of $\tau(x)$. Subject to this, Horn's
theorem gives a positive solution for matrices.

The $II_1$--factor analogue of the diagonal subalgebra in the $n\times n$ matrices is a maximal abelian (self--adjoint)
subalgebra, called a masa, $A\subseteq M$.
We let ${\bb E}_A$ denote the trace--preserving conditional expectation of $M$ onto $A$.
The carpenter problem in a II$_1$--factor is, given $x\in A$ with $0\le x\le 1$, to find a projection $p\in M$ so that
${\bb E}_A(p)=x$;
this problem remains open.

The Schur--Horn problem for a masa
$A\subseteq M$ may be stated as follows: for a suitable notion of spectral majorization of
$x\in A$ by $z\in M$ (described in Section~\ref{sec5}), does there exist an
element $y\in M$ having the same spectral distribution as $z$ so that
$x= {\bb E}_A(y)$?
In this paper we address these two questions for specific
choices of masas. We give positive solutions to both the carpenter problem and
the Schur--Horn problem when $A$ is either a generator masa or the radial masa
in a free group factor. We also investigate the Cartan masa in the hyperfinite
factor, and obtain a version of the Schur--Horn theorem which is slightly weaker
than the one above.

The paper is organized as follows. In Section~\ref{sec2} we present a technical
result giving a sufficient condition for positive solutions of the carpenter
problem (Lemma~\ref{lem2.1}), and all of our subsequent results are based on
this. The main results on masas in free group factors are contained in
Section~\ref{sec3}, while Section~\ref{sec4} is concerned with the carpenter
problem for the Cartan masa in the hyperfinite factor. Here our results are less
definitive, although we do present classes of elements in $A$ for which a
positive solution can be given. In a different direction, we also solve the
carpenter problem for all elements of the Cartan masa $A$, but modulo an
automorphism of $A$.

In the final section, we consider the Schur--Horn problem.
We first consider a minor reformulation of Arveson and Kadison's version of the problem
and show that it is equivalent to theirs.
Then we give a positive
solution for the generator masa and the radial masa in free group factors. We
also investigate the Cartan masa, proving a weaker version of the Schur--Horn
problem as mentioned above.

There has been considerable recent interest in these problems, and we have drawn
heavily on the ideas and results presented in \cite{A-M0,A-M,A-K,Ka1,Ka2}.

\section{An existence method}\label{sec2}

\indent 

In the first lemma below we will describe a sufficient condition for solving the carpenter problem positively, and in subsequent sections we will apply it in various situations.

We fix a finite von Neumann algebra $M$ with a normal normalized trace $\tau$ and a masa $A\subseteq M$. We denote the unique trace preserving conditional expectation of $M$ onto $A$ by ${\bb E}_A$. For each $x\in A$ satisfying $0\le x \le 1$, we introduce the $w^*$-compact convex subset $\Gamma_x \subseteq M$, defined by
\begin{equation}\label{eq2.0}
\Gamma_x = \{y\in M\colon \ 0\le y\le 1, \quad {\bb E}_A(y) = x\}.
\end{equation}
This set is nonempty since it contains $x$, and any projection $p\in\Gamma_x$ is a solution of the carpenter problem for the element $x\in A$. Any such projection is automatically an extreme point of $\Gamma_x$, and so it suffices to consider the extreme points of $\Gamma_x$. These are abundant, by the Krein--Milman theorem.

For each nonzero projection $e\in M$, define a bounded map $\Phi_e\colon \ eMe\to A$ by
\begin{equation}\label{eq2.1}
\Phi_e(ex e) = {\bb E}_A(ex e), \qquad x\in M.
\end{equation}

\begin{lem}\label{lem2.1}
Let $A$ be a masa in a finite von Neumann algebra $M$, and suppose that $\Phi_e$ is not injective for each nonzero projection $e\in M$. Given $x\in A$ satisfying $0\le x\le 1$, there exists a projection $p\in M$ such that ${\bb E}_A(p) =x$.
\end{lem}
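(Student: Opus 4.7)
The natural approach is to combine Krein--Milman with a perturbation argument that uses the hypothesis to destroy any non-projection extreme point of $\Gamma_x$.

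First I would note that $\Gamma_x$ is nonempty, convex, and $w^*$-compact, so by Krein--Milman it has an extreme point $y$. The goal then reduces to showing that every extreme point of $\Gamma_x$ is a projection; equivalently, if $y\in\Gamma_x$ is not a projection, one must produce a nonzero self--adjoint $z\in M$ with $y\pm z\in\Gamma_x$.

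To build such a $z$, assume $y$ is not a projection, so the spectrum of $y$ meets $(0,1)$. Choose $\vp>0$ small enough that the spectral projection $e$ of $y$ for the interval $[\vp,1-\vp]$ is nonzero. Since $e$ commutes with $y$, one has $\vp e\le eye\le (1-\vp)e$ and $y = eye + (1-e)y(1-e)$. The hypothesis applied to this $e$ says $\Phi_e$ is not injective, so there is a nonzero $w\in eMe$ with ${\bb E}_A(w)=0$. Replacing $w$ by its real or imaginary part (whichever is nonzero), and then rescaling, I may assume $w$ is self--adjoint with $\|w\|\le \vp$ and still ${\bb E}_A(w)=0$. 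Set $z=w$; then $z=eze$ is supported in the corner $eMe$.

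Now I would verify $y\pm z\in\Gamma_x$. For the expectation, ${\bb E}_A(y\pm z)={\bb E}_A(y)\pm 0 = x$. For the order bounds, the decomposition $y\pm z = (eye\pm z) + (1-e)y(1-e)$ is a direct sum across the projections $e$ and $1-e$; on the $(1-e)$--corner $y\pm z$ agrees with $(1-e)y(1-e)\in[0,1-e]$, and on the $e$--corner one has
\[
\vp e - \vp e \;\le\; eye \pm z \;\le\; (1-\vp)e + \vp e,
\]
so $0\le eye\pm z\le e$. Thus $0\le y\pm z\le 1$, and $y = \tfrac12(y+z)+\tfrac12(y-z)$ expresses $y$ as a nontrivial convex combination in $\Gamma_x$, contradicting extremality.

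The only substantive step is the third one — choosing a spectral projection of $y$ on which $y$ is uniformly bounded away from $0$ and $1$, so that a small self--adjoint perturbation from $eMe$ keeps us inside $\Gamma_x$. Once $e$ is chosen this way, the hypothesis supplies exactly the perturbation needed, and the rest is bookkeeping.
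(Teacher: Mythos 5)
Your proposal is correct and follows essentially the same route as the paper's proof: Krein--Milman, reduction to showing every extreme point of $\Gamma_x$ is a projection, choice of a spectral projection $e$ of $y$ on which $y$ is bounded between $\vp$ and $1-\vp$, and a small self-adjoint perturbation $z\in eMe\cap\ker{\bb E}_A$ supplied by the noninjectivity of $\Phi_e$. The only difference is that you spell out the corner decomposition $y=eye+(1-e)y(1-e)$ when verifying $0\le y\pm z\le 1$, which the paper leaves implicit.
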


\begin{proof}
Fix an arbitrary $x\in A$ satisfying $0\le x\le 1$. Under the stated hypotheses,
we will show that every extreme point of $\Gamma_x$ is a projection and the
result then follows. To obtain a contradiction, let $y$ be an extreme point of
$\Gamma_x$ which is not a projection. For a sufficiently small choice of
$\vp>0$, the spectral projection $e$ of $y$ for the interval $(\vp,1-\vp)$ is
nonzero. Since $\Phi_e$ is not injective we may choose a nonzero element $z\in
eMe$ so that $\Phi_e(z)=0$. By considering real and imaginary parts we may take
$z$ to be self-adjoint, and by scaling we may assume that $\|z\|\le \vp$.
Note that ${\bb E}_A(y\pm z)={\bb E}_A(y)=x$. Since $\vp e\le ye\le (1-\vp)e$,
it follows that $0\le y \pm z\le 1$, and so $y\pm z\in \Gamma_x$ with $y =
((y+z) +(y-z))/2$. This contradicts the assumption that $y$ is an extreme point,
showing that every extreme point is a projection.
\end{proof}

To illustrate the use of Lemma~\ref{lem2.1}, we now show that the carpenter problem has a positive solution for any masa in a free group factor with an uncountable number of generators.

\begin{thm}\label{thm2.2}
Let $S$ be an uncountable set and let $ {\bb F}_S$ be the free group on a set of generators indexed by $S$. If $A$ is a masa in $L({\bb F}_S)$ and $x\in A$ satisfies $0\le x\le 1$, then there exists a projection $p\in L({\bb F}_S)$ such that ${\bb E}_A(p)=x$.
\end{thm}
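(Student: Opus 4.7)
The plan is to verify the hypothesis of Lemma~\ref{lem2.1}: namely, that for every nonzero projection $e \in M := L(\bb{F}_S)$, the map $\Phi_e\colon eMe \to A$ is not injective. Once this is shown, Lemma~\ref{lem2.1} produces, for any given $x \in A$ with $0\le x\le 1$, the required projection $p$.

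The engine of the argument is that uncountability of $S$ provides a ``fresh'' generator free from what is there. Every $m \in L(\bb{F}_S)$ has countable Fourier support in $\bb{F}_S$, because $g\mapsto \tau(m\,u_g^*)$ lies in $\ell^2(\bb{F}_S)$, and each element of $\bb{F}_S$ is a finite word in the generators; hence each $m$ lies in $L(\bb{F}_T)$ for some countable $T\subseteq S$. In particular $e\in L(\bb{F}_{T_e})$ for a countable $T_e\subseteq S$, and, using that a masa in a finite factor is (effectively) singly generated by a self-adjoint element, one obtains $A\subseteq L(\bb{F}_{T_A})$ for a countable $T_A\subseteq S$. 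Set $T_0:=T_A\cup T_e$; since $|S|>\aleph_0$ one can choose $s\in S\setminus T_0$. By the defining freeness of free group factors, $u:=u_s$ is then a Haar unitary free from $L(\bb{F}_{T_0})$, and in particular free from $A\cup\{e\}$.

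Now set $z:=eue\in eMe$. I claim $z\neq 0$ and $\bb{E}_A(z)=0$. For nonvanishing, cyclicity of $\tau$ together with $e^2=e$ gives $\tau(zz^*)=\tau(ueu^*e)$; writing $e=\tau(e)\cdot 1 + e_0$ with $e_0$ centered and exploiting freeness of $u$ from $e$ together with $\tau(u)=0$, the standard vanishing of traces of alternating centered free products collapses every cross term and leaves $\tau(zz^*)=\tau(e)^2>0$. For $\bb{E}_A(z)=0$: for each $a\in A$, cyclicity yields $\tau(za)=\tau(u\cdot eae)$, and since $eae\in L(\bb{F}_{T_0})$ while $u$ is free from $L(\bb{F}_{T_0})$ with $\tau(u)=0$, this trace equals $\tau(u)\tau(eae)=0$. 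Hence $\bb{E}_A(z)$ is orthogonal to every element of $A$ and so $\bb{E}_A(z)=0$. Thus $\Phi_e$ has nontrivial kernel, and the theorem follows.

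The step I expect to be the main obstacle is the claim that the \emph{entire} masa $A$ sits inside $L(\bb{F}_{T_A})$ for a countable $T_A$. For a singly generated masa this is immediate from the Fourier support of its generator, and the single-generation of masas is classical in separable II$_1$ factors; but $L(\bb{F}_S)$ has non-separable predual when $|S|>\aleph_0$, so this reduction is not completely routine. The alternative is to avoid it by constructing $z$ in a carefully chosen separable subfactor and using an amalgamated-freeness identity to show that $\bb{E}_A$ agrees on $z$ with the expectation onto a smaller, easier-to-handle subalgebra.
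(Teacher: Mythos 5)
Your overall strategy---verifying the hypothesis of Lemma~\ref{lem2.1} for every nonzero projection $e$---is the same as the paper's, and your freeness computations are correct as far as they go: if $A\cup\{e\}\subseteq L(\bb{F}_{T_0})$ for a countable $T_0$ and $s\in S\setminus T_0$, then $z=eu_se$ satisfies $\tau(zz^*)=\tau(e)^2>0$ and $\tau(za)=\tau(u_s\,eae)=0$ for all $a\in A$, so $\Phi_e$ has nontrivial kernel. However, the step you yourself flag as ``the main obstacle'' is a genuine gap, and it is exactly the point at which the paper must invoke an external theorem. The claim that the masa $A$ sits inside $L(\bb{F}_{T_A})$ for a countable $T_A$ is equivalent to $A$ being separable (countably generated), and this is \emph{not} automatic: masas in nonseparable II$_1$ factors can be nonseparable (for instance, the tensor product of Cartan masas in an uncountable tensor power of the hyperfinite factor, which appears in Section~\ref{sec4}). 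Your appeal to single generation of masas is circular here, since an abelian von Neumann algebra is singly generated precisely when it is separable. For $L(\bb{F}_S)$ the separability of every masa is a theorem of Popa, cited as \cite{Po1} in the paper, and without it your construction cannot start: the condition $\bb{E}_A(z)=0$ quantifies over all of $A$, so you have no way to choose $u_s$ free from $A$ unless $A$ has been located inside a countably generated subalgebra. Your suggested workaround via a ``carefully chosen separable subfactor'' does not obviously evade this.

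Once Popa's theorem is granted, your argument does close the gap, and by a route genuinely different from the paper's. The paper argues abstractly (``cardinality considerations''): since $A$ is separable and $eMe$ is not, the normal map $\Phi_e$ cannot be injective---equivalently, per Remark~\ref{rem2.3}(i), $eAe$ cannot be $\|\cdot\|_1$-dense in $eMe$. You instead exhibit an explicit nonzero element of $\ker\Phi_e$ using a fresh free Haar unitary, which is more constructive and makes transparent \emph{why} a generator outside the support of $A$ and $e$ is annihilated by $\bb{E}_A$; the paper's version is shorter and computation-free. Both proofs stand or fall on \cite{Po1}, so you should state and cite that result rather than leave it as an acknowledged obstacle.
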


\begin{proof}
From \cite{Po1}, any masa $A$ in $L({\bb F}_S)$ is separable as a von Neumann
algebra. Cardinality considerations then show that $\Phi_e$ must have a
nontrivial kernel for each nonzero projection $e\in L({\bb F}_S)$, and the
result follows from Lemma~\ref{lem2.1}.
\end{proof}

\begin{rem}\label{rem2.3}
\noindent (i)\quad The maps $\Phi_e$ introduced above are normal and so have
preduals. It is an easy calculation to see that $(\Phi_e)_*\colon  \ L^1(A) \to
L^1(eMe)$ is given by $(\Phi_e)_*(a) = eae$, $a\in A$, and extended by
continuity to $L^1(A)$. It then follows that noninjectivity of $\Phi_e$ is
equivalent to the failure of $eAe$ to be $\|\cdot\|_1$-dense in $eMe$, a
potentially useful reformulation.

\medskip

\noindent (ii) \quad In the case of type $\text{II}_1$ factors, we have no
example of a nonzero projection $e$ for which $\Phi_e$ is injective. However,
this can
occur for type I factors. Take $A$ to be the diagonal masa in $B(H)$ and let
$e\in A$ be a rank one projection. Then $eB(H)e = eAe$ and $\Phi_e$ is injective
in this case.

\medskip

\noindent (iii) \quad If $e\in M$ is a projection such that $e\{e,A\}'' e\ne
eMe$ then the map $\Phi_e$ is not injective. To see this, let $N = \{e,A\}''$
and observe that the condition $eNe\ne eMe$ gives a nonzero element $exe\in eMe$
so that ${\bb E}_{eNe}(exe)=0$. Then
\begin{equation}\label{eq2.a}
{\bb E}_A(exe) = {\bb E}_A({\bb E}_N(exe)) = {\bb E}_A({\bb E}_{eNe}(exe))= 0
\end{equation}
and $\Phi_e$ is not injective.$\hfill\square$
\end{rem}

The third part of this remark leads to a connection with another open problem, the question of whether separable von Neumann algebras must be singly generated.

\begin{lem}\label{lem2.4}
Let $M$ be a type $\text{\rm II}_1$ factor and let $A$ be a separable masa. If there exists a nonzero projection $e\in M$ such that $\Phi_e$ is injective, then $M$ is singly generated.
\end{lem}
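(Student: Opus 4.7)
The plan is to translate the hypothesis into a concrete finite generating set for $M$, and then to combine these generators into a single one using the abundant structure carried by the masa $A$.

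First, I would apply the contrapositive of Remark~\ref{rem2.3}(iii): since $\Phi_e$ is injective, the von Neumann subalgebra $N = \{e, A\}''$ must satisfy $eNe = eMe$. Because $A$ is a masa in a type II$_1$ factor it is diffuse, and because $A$ is separable it is singly generated, so we may take a Haar unitary $u \in A$ with $A = \{u\}''$. Hence $N = \{u, e\}''$ is generated by the two elements $u$ and $e$.

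Second, I would realize $M$ from its corner $eMe = eNe$ via matrix units. Pick an integer $n$ with $n\tau(e) \geq 1$, and partition $1 = q_1 + \cdots + q_n$ into orthogonal projections of trace $1/n$. Since $M$ is a type II$_1$ factor and $\tau(q_i) \leq \tau(e)$, there exist partial isometries $w_1, \ldots, w_n \in M$ with $w_i^* w_i = q_i$ and $w_i w_i^* \leq e$. Any $m \in M$ then decomposes as
\[
m = \sum_{i,j=1}^n q_i m q_j = \sum_{i,j=1}^n w_i^* (w_i m w_j^*) w_j,
\]
and each element $w_i m w_j^*$ lies in $eMe$. Consequently $M = \{eMe, w_1, \ldots, w_n\}''$, and combining with the first step,
\[
M = \{eNe, w_1, \ldots, w_n\}'' \subseteq \{u, e, w_1, \ldots, w_n\}'' \subseteq M,
\]
so that $M$ is generated by the $n+2$ elements $u, e, w_1, \ldots, w_n$.

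Third, I would reduce this finite generating set to a single generator. This is the main obstacle. The idea is to exploit the Haar unitary $u$, whose spectral calculus provides mutually orthogonal ``slots'' indexed by powers of $u$, and to encode the remaining finitely many elements as distinct spectral components of one operator via a suitable shifted combination; the standard single-generator machinery for type II$_1$ factors that are finitely generated and contain a Haar unitary (of the kind developed by Ge--Shen and collaborators) then delivers a single generator for $M$. The first two steps are essentially bookkeeping once Remark~\ref{rem2.3}(iii) is in hand; the genuine work is in this combination step, which relies crucially on $A$ being diffuse and separable so that a Haar unitary is available.
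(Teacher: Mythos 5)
Your first two steps are sound and match the paper's opening move: Remark~\ref{rem2.3}(iii) does give $eNe=eMe$ for $N=\{A,e\}''$, and your matrix--unit decomposition correctly shows that $M=\{u,e,w_1,\dots,w_n\}''$, so $M$ is generated by $n+2$ elements. The genuine gap is the third step, which is where the entire content of the lemma lives. There is no ``standard single-generator machinery'' that turns a finitely generated type $\mathrm{II}_1$ factor containing a Haar unitary into a singly generated one: every $\mathrm{II}_1$ factor with separable predual contains a Haar unitary (any masa in it is separable and diffuse), so the theorem you invoke would show that \emph{every} finitely generated $\mathrm{II}_1$ factor is singly generated --- in particular that $L({\bb F}_3)$, generated by three self-adjoint elements, is generated by two, which is a well-known open problem. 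The spectral ``packing'' trick you gesture at requires the pieces being combined to have orthogonal supports, so that each can be recovered from the sum by cutting with a projection; your generating set is anchored by a Haar unitary $u$ with full support, which leaves no room to absorb $e$ and the $w_i$. As written, step three asserts the conclusion rather than proving it.

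The paper's proof supplies exactly the missing orthogonality. It passes to the central support $z$ of $e$ in $N$ (which lies in $A$ because $A$ is a masa) and shows $zMz=Nz=\{Az,e\}''$; since $Az$ is separable and abelian, it is generated by a single self-adjoint element $x_1$, which after normalization satisfies $0\le x_1\le z$, so $zMz$ is generated by two self-adjoint elements supported under $z$. The complement $1-z$ is then split into projections $z_2,\dots,z_n$ equivalent to subprojections of $z$, with connecting partial isometries $v_i$, and the generators are augmented to $y_1=x_1+2z_2+\cdots+nz_n$ and $y_2=e+\sum_i(v_i+v_i^*)$: the $z_i$ are recovered as spectral projections of $y_1$, $x_1$ as $y_1z$, and $v_i$ as $y_2z_i$, whence $M=\{y_1,y_2\}''=\{y_1+iy_2\}''$. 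To repair your argument along your own lines you would have to replace the full-support generator $u$ by a generator of the cut-down abelian algebra $Az$ (or otherwise arrange disjoint supports) and carry out this bookkeeping explicitly; the appeal to general machinery cannot be made to work.
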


\begin{proof}
Let $N= \{A,e\}''$ and let $z$ be the central support of $e$ in $N$. Then $z$ is the identity element for the $w^*$-closed ideal $\ovl{NeN}^{w^*}$ in $N$. By Remark~\ref{rem2.3} (iii), the injectivity of $\Phi_e$ implies that  $eNe=eMe$. For any $m\in M$,
\begin{equation}\label{eq2.b}
zmz \in \ovl{NeN~m~NeN}^{w^*} \subseteq \ovl{NeMeN}^{w^*} = \ovl{NeNeN}^{w^*} =
Nz,
\end{equation}
showing that $zMz \subseteq zNz$. The reverse containment is obvious and so $zMz = Nz$. Since $z\in A$, this gives $zMz = \{Az,e\}''$, so the separability of $A$ implies that $zMz$ is generated by two self-adjoint elements $x_1$ and $x_2$. By adding a multiple of $z$ and scaling, we may assume that $0\le x_1\le z$.

Since $M$ is a finite factor, we can find projections $z_2,\ldots, z_n\in M$ which are all equivalent to subprojections of $z$ and such that $z + \sum\limits^n_{i=2} z_i=1$. Then choose partial isometries $v_2,\ldots, v_n\in M$ so that $v^*_iv_i=z_i$ and $v_iv^*_i\le z$ for $2\le i\le n$, and define
\begin{equation}
y_1=x_1+2z_2+\cdots+ nz_n, \quad y_2=e +v_2+v^*_2 +\cdots+ v_n+v^*_n.
\end{equation}\label{eq2.bb}
By construction, $z_2,\ldots, z_n$ are spectral projections of $y_1$ and so lie in $\{y_1,y_2\}''$, showing that this algebra also contains $x_1$. Since $y_2z_i=v_i$, $2\le i\le n$, we see that $\{y_1,y_2\}''$ also contains $v_2,\ldots, v_n$ and $e$, so in particular $zMz \subseteq \{y_1,y_2\}''$. Now $v^*_izv_i=z_i$, and so $z_iMz_j\subseteq \{y_1,y_2\}''$, showing that $M = \{y_1,y_2\}''$. Thus $M$ is singly generated by $y_1+iy_2$.
\end{proof}

It is currently unknown whether separable type $\text{II}_1$ factors exist that are not singly generated. Lemmas~\ref{lem2.1} and \ref{lem2.4} show that any such example would have a positive solution to the carpenter problem for any masa $A$.

We conclude this section by presenting a class of masas for which the carpenter problem has a positive solution. We will need a preliminary lemma which gives a norm density result. 

\begin{lem}\label{lem2.5}
Let $M$ be a separable type $\text{\rm II}_1$ factor and let $A$ be a masa in
$M$.
\begin{itemize}
\item[\rm (i)] If $r\in {\bb Q}\cap [0,1]$ then there exists a projection $p\in M$ so that ${\bb E}_A(p) = r1$. 
\item[\rm (ii)] Given $\vp>0$ and $x\in A$ satisfying $0\le x\le 1$, there exists a projection $p\in M$ such that
$
\|x-{\bb E}_A(p)\|<\vp.
$
\end{itemize}
\end{lem}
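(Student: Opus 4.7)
The plan is to prove (i) first via an explicit Fourier-unitary construction inside a matrix subfactor whose diagonal sits in $A$, and then deduce (ii) by applying (i) corner-wise to a step-function approximation of $x$.

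For (i), write $r=k/n$ with $0\le k\le n$. Since $A$ is a masa in a type II$_1$ factor it is diffuse, so one can partition $1$ into orthogonal projections $e_1,\dots,e_n\in A$ each of trace $1/n$. These projections are Murray--von Neumann equivalent in $M$, so choose partial isometries $v_1=e_1,v_2,\dots,v_n\in M$ with $v_i^*v_i=e_1$ and $v_iv_i^*=e_i$. The elements $u_{ij}:=v_iv_j^*$ then form a system of matrix units in $M$ with $u_{ii}=e_i\in A$. Let $\omega=e^{2\pi i/n}$ and introduce the Fourier unitary
\[
U=\frac{1}{\sqrt n}\sum_{i,j=1}^n \omega^{(i-1)(j-1)}u_{ij},
\]
and set $p=U(e_1+\cdots+e_k)U^*$, which is a projection. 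Direct expansion gives $Ue_jU^*=\frac{1}{n}\sum_{a,c}\omega^{(j-1)(a-c)}u_{ac}$. For $a\ne c$, the sandwich $u_{ac}=e_au_{ac}e_c$ forces ${\bb E}_A(u_{ac})=e_a{\bb E}_A(u_{ac})e_c=0$, since $e_a,e_c\in A$ are orthogonal; only the diagonal terms survive, yielding ${\bb E}_A(Ue_jU^*)=\frac{1}{n}\sum_a e_a=(1/n)\cdot 1$. Summing over $1\le j\le k$ produces ${\bb E}_A(p)=(k/n)\cdot 1=r\cdot 1$.

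For (ii), fix $\vp>0$ and use the spectral theorem in the abelian algebra $A$ to approximate $x$ in operator norm by a rational step function: choose $\|x-\sum_{i=1}^m r_iE_i\|<\vp$ with $r_i\in\bb Q\cap[0,1]$ and pairwise orthogonal projections $E_i\in A$ summing to $1$. Each corner $E_iME_i$ is a separable type II$_1$ factor with masa $AE_i$, and since $E_i\in A$ the expectation ${\bb E}_A$ restricts to the trace-preserving conditional expectation $E_iME_i\to AE_i$. Apply (i) inside each corner to obtain a projection $p_i\in E_iME_i$ with ${\bb E}_A(p_i)=r_iE_i$, and set $p=\sum_i p_i$, an orthogonal sum of projections in disjoint corners. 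Then ${\bb E}_A(p)=\sum r_iE_i$ lies within $\vp$ of $x$ in operator norm.

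The crux is (i): a generic masa $A$ has no normalizer large enough to average an arbitrary projection to a scalar, but placing the matrix-unit partition $\{e_i\}$ inside $A$ forces ${\bb E}_A(u_{ij})=0$ for $i\ne j$ purely from the orthogonality of $e_i$ and $e_j$ in $A$. The Fourier weights then collapse the double sum onto the $u_{ii}$'s, producing a scalar expectation with no structural hypothesis on $A$.
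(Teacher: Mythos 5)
Your proof is correct and follows the same overall strategy as the paper's: for (i), place $n$ orthogonal trace-$1/n$ projections of $A$ as the diagonal of a copy of ${\bb M}_n$ inside $M$, observe that ${\bb E}_A$ kills the off-diagonal matrix units, and produce a rank-$k$ projection with constant diagonal $k/n$; for (ii), approximate $x$ by a rational-valued step function over projections in $A$ and apply (i) in each corner $E_iME_i$ --- this part is essentially word-for-word the paper's argument. The one genuine difference is in how the flat-diagonal projection is obtained: the paper simply cites Horn's theorem for the existence of a projection $p\in{\bb M}_n$ with ${\bb E}_{{\bb D}_n}(p)=(k/n)I_n$, whereas you construct it explicitly as $U(e_1+\cdots+e_k)U^*$ with $U$ the discrete Fourier transform unitary, and verify directly that each $Ue_jU^*$ has constant diagonal $1/n$. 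Your route makes part (i) self-contained and elementary (no appeal to the Schur--Horn machinery that the paper is partly about), at the cost of a short computation; the paper's route is shorter but leans on the cited matrix result. Both are valid, and your verification that ${\bb E}_A(u_{ac})=e_a{\bb E}_A(u_{ac})e_c=0$ for $a\ne c$ is exactly the observation the paper phrases as ``${\bb E}_A$ and ${\bb E}_{{\bb D}_n}$ agree on ${\bb M}_n$.''
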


\begin{proof}
(i) \quad  The cases $r=0$ and $r=1$ are trivial so we may assume that
$r=k/n$ where $1\le k\le n-1$ for integers $k,n$. In $A$, choose $n$ orthogonal
projections $e_{11},\ldots, e_{nn}$ of trace $1/n$ and choose a matrix algebra
${\bb M}_n\subseteq M$ with diagonal ${\bb D}_n$ so that the $e_{ii}$'s are the
minimal diagonal projections. Since the $e_{ii}$'s lie in ${\bb D}_n$, the two
conditional expectations ${\bb E}_A$ and ${\bb E}_{{\bb D}_n}$ agree on ${\bb
M}_n$. From \cite{Ho}, there is a projection $p\in {\bb M}_n\subseteq M$ so that
${\bb E}_{{\bb D}_n}(p) = (k/n)I_n$, and so ${\bb E}_A(p) = (k/n)1\in A$.

\medskip

\noindent (ii) \quad Now consider a fixed but arbitrary $x\in A$ satisfying
$0\le x\le 1$ and let $\vp>0$ be given. Since $A$ is separable we may identify
$A$ with $L^\infty[0,1]$ and then we may choose projections $e_k$, $1\le k\le
n$, summing to 1, corresponding to disjoint measurable subsets of [0,1], and
constants $\lambda_k \in [0,1]$ so that
\begin{equation}\label{eq2.c}
\left\|x-\sum^n_{k=1} \lambda_ke_k\right\|_\infty <\vp.
\end{equation}
A further approximation allows us to assume that each $\lambda_k$ is rational in
[0,1]. Applying (i) to the containment $Ae_k \subseteq e_kMe_k$, we find
projections $p_k\le e_k$, $1\le k\le n$, so that ${\bb E}_{Ae_k}(p_k) =
\lambda_ke_k$. If we define a projection by $p = \sum\limits^n_{k=1}p_k$, then
${\bb E}_A(p) = \sum\limits^n_{k=1} \lambda_ke_k$ and $\|x-{\bb E}_A(p)\| <\vp$
as required.%\hfill\qed
\end{proof}

\begin{thm}\label{thm2.6}
Let $A$ be a masa in a type $\text{\rm II}_1$ factor $M$ and let $\omega$ be a free ultrafilter on ${\bb N}$. Then the carpenter problem has a positive solution for the masa $A^\omega\subseteq M^\omega$.
\end{thm}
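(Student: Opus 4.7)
The plan is to use Lemma~\ref{lem2.5}(ii) to obtain a $\vp$-approximate solution at each level of a representing sequence, and then to assemble these approximate projections into an exact solution in the ultrapower $M^\omega$.

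Given $x\in A^\omega$ with $0\le x\le 1$, I would first choose a bounded representing sequence $(x_n)_n\subseteq A$ for $x$; by applying the $1$-Lipschitz function $t\mapsto \max(0,\min(1,t))$ entrywise, we may arrange that $0\le x_n\le 1$ for every $n$. For each $n\ge 1$, Lemma~\ref{lem2.5}(ii) applied with $\vp = 1/n$ yields a projection $p_n\in M$ with
\[
\|x_n-{\bb E}_A(p_n)\|<1/n.
\]
Let $p\in M^\omega$ be the element represented by the uniformly bounded sequence $(p_n)_n$; since each $p_n^2=p_n=p_n^*$, the class $p$ is a projection.

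The remaining step is to confirm that the trace-preserving conditional expectation ${\bb E}_{A^\omega}\colon M^\omega\to A^\omega$ is computed on representing sequences by sending the class of $(m_n)_n$ to the class of $({\bb E}_A(m_n))_n$. This follows because the latter prescription is a well-defined normal $A^\omega$-bimodule idempotent onto $A^\omega$ preserving the ultralimit trace $\tau_\omega$, and any such map must coincide with ${\bb E}_{A^\omega}$. Consequently ${\bb E}_{A^\omega}(p)$ is the class of $({\bb E}_A(p_n))_n$, and since $\|x_n-{\bb E}_A(p_n)\|_2\le 1/n\to 0$ we conclude that ${\bb E}_{A^\omega}(p)=x$ in $A^\omega$.

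The only real subtlety I foresee is that Lemma~\ref{lem2.5}(ii) is stated under the assumption that $M$ is separable, while the present theorem makes no such assumption; however, the argument for that lemma only uses that positive contractions in $A$ admit uniform approximation by finite rational linear combinations of orthogonal projections, which follows from the spectral theorem in any abelian von Neumann algebra. So either separability of $M$ is implicitly understood in the ultrapower context, or Lemma~\ref{lem2.5}(ii) applies without change. I expect the pointwise description of ${\bb E}_{A^\omega}$ to be the most delicate check in the write-up, but it is routine rather than a genuine obstacle; the real content is the approximate carpenter statement of Lemma~\ref{lem2.5}(ii), which the ultrapower conveniently promotes from approximate to exact.
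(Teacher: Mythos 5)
Your proposal is correct and follows essentially the same route as the paper: approximate each term of a representing sequence using Lemma~\ref{lem2.5}(ii), take the resulting sequence of projections as a representative of a projection in $M^\omega$, and use that ${\bb E}_{A^\omega}$ acts coordinatewise. Your side remark about separability is also well taken --- the proof of Lemma~\ref{lem2.5}(ii) only needs the spectral-theorem approximation of $x$ by rational combinations of orthogonal projections, which holds in any abelian von Neumann algebra.
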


\begin{proof}
Let $x\in A^\omega$ satisfy $0\le x\le 1$ and choose a representative
$(x_1,x_2,\ldots)$ for $x$ where $x_n\in A$ and $0\le x_n\le 1-1/n$. By
Lemma~\ref{lem2.5}, there exist elements $y_n\in A$, $0\le y_n\le 1$, and
projections $p_n\in M$ such that $\|x_n-y_n\| <\frac1n$ and ${\bb
E}_A(p_n)=y_n$. Then $(y_1,y_2,\ldots)$ is also a representative for $x$,
$p=(p_1,p_2,\ldots)$ is a projection in $M^\omega$, and it follows that ${\bb
E}_{A^\omega }(p) = ({\bb E}_A(p_1)$, ${\bb E}_A(p_2),\ldots)=x$.
\end{proof}

\section{Free group factors}\label{sec3}

\indent 

In this section we consider the carpenter problem in  free group factors. Let
${\bb F}_n$ denote the free group on $n$ generators $\{g_1,\ldots, g_n\}$, $2\le
n<\infty$. There are types
of masas in the free group factor $L({\bb F}_n)$ that have been much studied.
Each $g_i$ generates a masa
$A_i$, called a generator masa. The second type is the radial or Laplacian
masa, whose generator is the self-adjoint element $\sum\limits^n_{i=1}
(g_i+g^{-1}_i)$. We consider first the generator masa.

\begin{thm}\label{thm3.1}
Let $g_1,\ldots, g_n$ be the generators for ${\bb F}_n$, $2\le n\le\infty$, and let $A_i$ be the $i^{\text{th}}$ generator masa, where $i$ is fixed.
Given $x\in A_i$, $0\le x\le 1$, there exists a projection $p\in L({\bb F}_n)$
such that ${\bb E}_{A_i}(p)=x$.
\end{thm}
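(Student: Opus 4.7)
The plan is to deduce the theorem from Lemma~\ref{lem2.1}: it suffices to show that for every nonzero projection $e \in M := L(\mathbb{F}_n)$, the normal positive contraction $\Phi_e \colon eMe \to A_i$ fails to be injective. Equivalently, by Remark~\ref{rem2.3}(iii), one needs $e\{e, A_i\}''e \ne eMe$, i.e.\ a nonzero $y \in eMe$ satisfying $\mathbb{E}_{A_i}(y) = 0$.

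The structural tool is the free-product decomposition $M = A_i * B$, with $B := (\{g_k : k \ne i\})'' \cong L(\mathbb{F}_{n-1})$ freely independent of $A_i$. Freeness gives the identity $\mathbb{E}_{A_i}(a_0 b a_1) = \tau(b)\,a_0 a_1$ for $a_0, a_1 \in A_i$ and $b \in B$; in particular, $B^\circ := B \cap \ker\tau \subseteq \ker\mathbb{E}_{A_i}$. As a motivating easy case, when $e \in A_i$ one takes $y := eg_je$ for any $j \ne i$: then $y \in eMe$, $A_i$-bimodularity gives $\mathbb{E}_{A_i}(y) = e\,\mathbb{E}_{A_i}(g_j)\,e = 0$, and freeness of $e$ with $g_j$ yields $\|y\|_2^2 = \tau(eg_j^*eg_j) = \tau(e)^2 > 0$, so $y$ is a nonzero kernel element.

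The main obstacle is the general case, where $e$ need not commute with $A_i$ so the $A_i$-bimodularity shortcut is unavailable. My approach would be to seek $y \in eMe \cap \ker\mathbb{E}_{A_i}$ of the form $y = ebe - g$, with $b \in B^\circ$ carefully chosen and $g \in e\overline{eA_ie}^{\,\mathrm{vN}}e$ a correction term cancelling the residual $A_i$-part $\mathbb{E}_{A_i}(ebe)$. The delicate point is to guarantee $y \ne 0$. For this, I expect to exploit the infinite multiplicity of the $A_i$-$A_i$ bimodule $L^2(M) \ominus L^2(A_i)$ --- arising from the infinite dimensionality of $B^\circ$ in the free-product decomposition of $L^2(M)$ --- to produce a nonzero $y$; the singularity of $A_i$ in $M$ should play a role in ensuring that the correction term $g$ cannot absorb the whole of $ebe$. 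Once such a nonzero $y$ is in hand, Lemma~\ref{lem2.1} delivers the desired projection $p$.
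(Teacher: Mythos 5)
Your reduction to Lemma~\ref{lem2.1} requires showing that $\Phi_e$ is noninjective for \emph{every} nonzero projection $e\in L(\mathbb{F}_n)$, and that is exactly where the proposal has a genuine gap. The computation for $e\in A_i$ is fine, but the general case is the whole difficulty: as Remark~\ref{rem2.3}(ii) observes, no example of an injective $\Phi_e$ in a type $\mathrm{II}_1$ factor is known, yet neither is a proof of noninjectivity for arbitrary $e$, even for a generator masa. Your ansatz $y=ebe-g$ does not resolve this: to get $\mathbb{E}_{A_i}(y)=0$ you need $g\in eMe$ with $\mathbb{E}_{A_i}(g)=\mathbb{E}_{A_i}(ebe)$, and nothing in the sketch rules out that every such corrected element vanishes, i.e.\ that $eA_ie$ is $\|\cdot\|_1$-dense in $eMe$ (the reformulation in Remark~\ref{rem2.3}(i)). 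The appeal to ``infinite multiplicity'' and ``singularity'' is a hope rather than an argument; note also that Remark~\ref{rem2.3}(iii) gives only a \emph{sufficient} condition, and since $L(\mathbb{F}_n)$ is singly generated one cannot exclude $\{e,A_i\}''=M$ a priori, so even that route is not guaranteed to be available for every $e$.

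The paper sidesteps this obstruction entirely. For $n=2$ it first solves the problem inside the nonseparable factor $L(\mathbb{F}_S)$, $S=\{1,2\}\cup S_0$ with $S_0$ uncountable, where Theorem~\ref{thm2.2} applies because every masa there is separable and cardinality alone forces $\Phi_e$ to have nontrivial kernel. The resulting projection $q$ has only countably many nonzero Fourier coefficients, hence lies in $L(\mathbb{F}_T)$ for a countable $T\ni 1$, and the trace-preserving embedding $\phi(g_{t_i})=g_2^{\,i-1}g_1g_2^{\,1-i}$ carries $L(\mathbb{F}_T)$ into $L(\mathbb{F}_2)$ while fixing $A_1$ pointwise, so $p=\phi(q)$ satisfies $\mathbb{E}_{A_1}(p)=x$; the general $n$ follows by restricting to $L(\{g_i,g_j\})\cong L(\mathbb{F}_2)$. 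To salvage your route you would have to actually prove noninjectivity of $\Phi_e$ for all nonzero $e$ in $L(\mathbb{F}_n)$, a substantially harder problem that the paper deliberately avoids.
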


\begin{proof}
We first consider the case $n=2$, and without loss of generality we take $i=1$.
Let $S_0$ be an uncountable set and let $S=\{1,2\} \cup S_0$. Then the free
group factor $L({\bb F}_S)$ with generators $g_1,g_2$, and $g_s$ for $s\in S_0$
contains $A_1$ as a masa. By Theorem~\ref{thm2.2}, there is a projection $q\in
L({\bb F}_S)$ such that ${\bb E}_{A_1}(q)=x$. The underlying Hilbert space
$L^2(L({\bb F}_S))$ has an orthonormal basis of group elements and the Fourier
series for $q$ can only have countably many nonzero terms. Thus there is a
countable subset $T\subseteq S$, whose elements we list as $t_1,t_2,\ldots$ with
$t_1=1$, so that $q\in L({\bb F}_T)$. Define an embedding $\phi\colon \ L({\bb
F}_T) \to L({\bb F}_2)$ on generators by $\phi(g_{t_i}) =
g^{i-1}_2g_1g^{1-i}_2$, $i\ge 1$. Then $\phi$ is the identity on $A_1$, and
${\bb E}_{A_1}(\phi(q)) = x$. The desired projection is then $p=\phi(q)$.

For the general case, choose an integer $j\ne i$. Then $A_i \subseteq
L(\{g_i,g_j\})\cong L({\bb F}_2) \subseteq L({\bb F}_n)$, and the result follows
from above since the desired projection can be chosen from $L({\bb F}_2)$.
\end{proof}

For the notion of freeness that is used below, see \cite{V85} or \cite{VDN}.

\begin{cor}\label{cor:freesym}
In a type ${\mathrm{II}}_1$ factor $M$ with tracial state $\tau$, if
$A\subseteq M$ is a masa
and if $s\in M$ is a symmetry with $\tau(s)=0$ and such that $A$ and $\{s\}$ are free
with respect to $\tau$, then for every $x\in A$ satisfying $0\le x\le1$, there exists
a projection $p\in M$ so that ${\bb E}_A(p)=x$.
\end{cor}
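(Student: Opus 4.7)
The plan is to reduce the statement to Theorem~\ref{thm3.1} by exhibiting, inside $W^*(A,s)\subseteq M$, a copy of $L({\bb F}_2)$ in which $A$ sits as one of the two generator masas.

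First, since $A$ is a masa in a type $\mathrm{II}_1$ factor it is diffuse, so I would fix a Haar unitary $u\in A$ with $W^*(u)=A$ and set $v=sus$. Because $s$ is a self-adjoint unitary, $v$ is itself a unitary, and for every nonzero integer $n$,
\begin{equation*}
\tau(v^n)=\tau(su^ns)=\tau(s^2u^n)=\tau(u^n)=0,
\end{equation*}
so $v$ is again a Haar unitary.

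The main step is to verify that $u$ and $v$ are free in $M$. Since the centered subspaces of $W^*(u)$ and $W^*(v)$ are the $L^2$-closed linear spans of $\{u^k:k\neq 0\}$ and $\{v^j:j\neq 0\}$ respectively, it suffices to check that any alternating product $u^{k_1}v^{j_1}u^{k_2}v^{j_2}\cdots$ with all exponents nonzero has trace zero. Substituting $v^j=su^js$ turns such a word into $u^{k_1}su^{j_1}su^{k_2}su^{j_2}s\cdots$, which is an alternating word in $A$ and $W^*(s)$ whose $A$-letters are nonzero powers of the Haar unitary $u$ (hence centered) and whose $W^*(s)$-letters are all equal to $s$ (centered since $\tau(s)=0$). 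The hypothesis that $A$ and $\{s\}$ are free then forces the trace to vanish, establishing freeness of $u$ and $v$.

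With the freeness of $u$ and $v$ in hand, $W^*(u,v)$ is isomorphic to $L({\bb F}_2)$ with $u$ and $v$ as canonical free generators, and $A=W^*(u)$ appears as a generator masa in this copy. Theorem~\ref{thm3.1} then produces a projection $p\in W^*(u,v)\subseteq M$ satisfying ${\bb E}_A^{W^*(u,v)}(p)=x$, and uniqueness of the trace-preserving conditional expectation yields ${\bb E}_A^M(p)=x$, as required. The only substantive obstacle is the freeness verification of $u$ and $v$; everything else is straightforward bookkeeping.
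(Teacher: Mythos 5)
Your proposal is correct and follows essentially the same route as the paper, whose proof is the one-line observation that $A$ and $sAs$ are free and together generate a copy of $L({\bb F}_2)$, so that Theorem~\ref{thm3.1} applies. You have simply written out the freeness verification (via the Haar unitaries $u$ and $sus$) that the paper leaves implicit.
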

\begin{proof}
Since $A$ and $sAs$ are free
and together generate a copy of $L({\bb F}_2)$, this follows from
Theorem~\ref{thm3.1}.
\end{proof}

\begin{rem}\label{rem3.2}
Now it is clear that if a masa
$A\subseteq L({\bb F}_n)$ is supported on at most $n-1$ generators, then 
the carpenter problem for
$A$ has a positive solution.
$\hfill\square$
\end{rem}

We now consider the radial masa $B$ in $L({\bb F}_n)$ for $2\le n<\infty$.

\begin{thm}\label{thm3.3}
Let $B$ be the radial masa in $L({\bb F}_n)$ for a fixed $n$ in the range $2\le n<\infty$. Given $x\in B$, $0\le x\le 1$, there exists a projection $p\in L({\bb F}_n)$ so that ${\bb E}_B(p)=x$.
\end{thm}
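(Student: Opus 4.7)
The plan is to reduce the theorem to Corollary~\ref{cor:freesym} by exhibiting, inside $L({\bb F}_n)$ itself, a self-adjoint unitary $s$ of trace zero that is free from $B$ with respect to $\tau$. Once such an $s$ has been produced, $B$ and $W^*(s)\cong {\bb C}\oplus{\bb C}$ are in free position with $B$ still maximal abelian, so Corollary~\ref{cor:freesym} immediately yields, for each $x\in B$ with $0\le x\le 1$, a projection $p\in L({\bb F}_n)$ satisfying ${\bb E}_B(p)=x$.

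The construction of $s$ is the heart of the matter, and my plan is to use structural results about the radial masa. The idea is to identify a von Neumann subalgebra $N\subseteq L({\bb F}_n)$ such that $B$ and $N$ are free and $N$ is large enough (for instance, diffuse) to contain a trace-zero symmetry. One source of such an $N$ is the Pytlik--Szwarc-type decomposition used in the analysis of the radial masa, which describes the $B$--$B$ bimodule $L^2(L({\bb F}_n))\ominus L^2(B)$ explicitly and can be leveraged to expose a free complement of $B$ inside an amalgamated free product presentation of $L({\bb F}_n)$; taking $s$ to be a trace-zero self-adjoint unitary in that complement completes the argument. A cleaner variant would be to realize $L({\bb F}_n)$ as an amalgamated free product $M_1 *_{B_0} M_2$ with $B\subseteq M_1$, in which case any trace-zero symmetry from the standard generators of $M_2$ is automatically free from $B$.

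The main obstacle is precisely the freeness condition. In contrast to the generator-masa case of Theorem~\ref{thm3.1}, where the combinatorics of reduced words in ${\bb F}_n$ makes the free position of $g_1$ and its conjugates $g_2^{i-1}g_1g_2^{1-i}$ completely transparent, the radial masa is generated by $\sum_{i=1}^n(g_i+g_i^{-1})$ whose spectral projections involve infinite Fourier series over ${\bb F}_n$. Consequently, no naive candidate (such as a single generator, a commutator, or a short word) is visibly free from $B$, and verifying freeness ultimately requires either an explicit mixed-moment computation against the Pytlik--Szwarc wave vectors or the invocation of a pre-existing free-product decomposition of $(L({\bb F}_n),B)$. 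The ultrapower approach via Theorem~\ref{thm2.6} does not suffice on its own, since it produces a projection in $L({\bb F}_n)^\omega$ rather than in $L({\bb F}_n)$; bringing the projection back to $L({\bb F}_n)$ seems to require exactly the freeness input described above, so I expect the proof to hinge on exhibiting the free symmetry $s$ directly.
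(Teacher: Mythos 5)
Your reduction to Corollary~\ref{cor:freesym} is exactly the right target, but the proposal stops at the point where the actual work begins: you never produce the symmetry $s$ (or, what the proof of Corollary~\ref{cor:freesym} really uses, a unitary $u$ with $B$ and $uBu^*$ free), and the sources you gesture at do not obviously supply one. There is no evident amalgamated free product decomposition $L({\bb F}_n)=M_1*_{B_0}M_2$ with the radial masa sitting inside one factor, and the Pytlik--Szwarc bimodule analysis describes $L^2(L({\bb F}_n))\ominus L^2(B)$ as a $B$--$B$ bimodule but does not by itself exhibit anything free from $B$. So as written the argument has a genuine gap at its central step. (You are right that Theorem~\ref{thm2.6} is of no direct use here.)

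The missing construction is carried out in the paper by an elementary moment computation inside the free product decomposition $L({\bb F}_n)=A_1*\cdots*A_n$ into generator masas; no external structural results about the radial masa are needed. Set $h_i=g_i+g_i^{-1}$, let $L_i=\{h_i\}''\subseteq A_i$, and let $L=L_1*\cdots*L_n$, so that the radial masa satisfies $B\subseteq L$. Identifying $A_i$ with $L^\infty[-1,1]$ so that $L_i$ is the subalgebra of even functions, take $v_i\in A_i$ to be the odd symmetry $1-2\chi_{[0,1]}$; then for centered $f\in L_i$ the products $v_if$ and $fv_i$ are odd functions, hence trace-zero elements of $A_i$. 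An alternating word from the algebras $v_1Lv_1,\dots,v_nLv_n$ therefore rewrites as an alternating word of centered elements of the $A_i$'s and has trace zero, which shows these algebras are free; in particular $B$ and $v_1v_2Bv_2v_1$ are free, and the argument of Corollary~\ref{cor:freesym} (i.e., Theorem~\ref{thm3.1} applied to $\{B,\,v_1v_2Bv_2v_1\}''\cong L({\bb F}_2)$ with $B$ as a generator masa) finishes the proof. This is the concrete freeness input your outline is missing.
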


\begin{proof}
Let $g_1,\ldots, g_n$ be the generators of ${\bb F}_n$ and let $A_i$ be the
$i^{\text{th}}$ generator masa. For each $i$, let $h_i$ be $g_i+g^{-1}_i$ and
let $L_i\subseteq A_i$ be the abelian von Neumann algebra generated by $h_i$. We
denote by $L$ the von Neumann algebra generated by $\{L_i\colon \ 1\le i\le n\}$
which can be regarded as the free product $L_1*L_2 *\cdots* L_n$.

If we identify $A_i$ with $L^\infty[-1,1]$, then $L_i$ is the subalgebra of even
functions. let $v_i\in A_i$ be the self-adjoint unitary corresponding to the odd
function $1-2\chi_{[0,1]}$. For each $f\in L_i$, $fv_i$ is an odd function and
so has trace 0. We now wish to show that the algebras $v_1Lv_1$,
$v_2Lv_2,\ldots, v_nLv_n$ are free.

Recall that the centered elements of a type $\text{II}_1$ factor $N$ are $\overset{\circ}{N} = \{y\in N\colon \ \tau(y)=0\}$. In order to show freeness, of the algebras $v_iLv_i$, it suffices to show that the trace vanishes on finite products of the form
\begin{equation}\label{eq3.a}
v_{i_1}y_1v_{i_1}v_{i_2}y_2v_{i_2} \ldots v_{i_k}y_kv_{i_k}
\end{equation}
where each $y_i\in \overset{\circ}{L}$ and $i_j\ne i_{j+1}$ for $1\le j\le k-1$. Products of the form $z_{r_1}z_{r_2} \ldots z_{r_s}$ with $z_{r_i}\in \overset{\circ}{L}_{r_i}$, $r_i\ne r_{i=1}$, span a weakly dense subspace of $L$ so we may assume that each $y_i$ has this form. Consider
\begin{equation}\label{eq3.b}
v_{i_1}y_1v_{i_1} = v_{i_1}z_{r_1}\ldots z_{r_s}v_{i_1}.
\end{equation}
A cancellation is only possible if $z_{r_1}\in L_{i_1}\subseteq A_{i_1}$ or
$z_{r_s} \in L_{i_1} \subseteq A_{i_1}$. In the first case, $v_{i_1}z_{r_1}$ is
an element of $A_{i_1}$ and so corresponds to an odd function on $[-1,1]$. Thus
$v_{i_1}z_{r_1}\in \overset{\circ}{A}_{i_1}$, and similarly, if $z_{r_s}\in
L_{i_1}$, then $z_{r_s}v_{i_1} \in \overset{\circ}{A}_{i_1}$. Analyzing in the
same way the behavior when each $v_{i_j}$ is adjacent to a $y_j$ leads to the
conclusion that the element of \eqref{eq3.a} has trace 0. Thus the algebras
$v_1Lv_1,\ldots, v_nLv_n$ are free, implying that $v_1Bv_1,\ldots, v_nBv_n$ are
free. Thus $B$ and $v_1v_2Bv_2v_1$ are free subalgebras of $L({\bb F}_n)$ and
Corollary~\ref{cor:freesym} finishes the proof.
\end{proof}

\section{Crossed products and tensor products}\label{sec4}

\indent

One of the most important masas is the Cartan masa  $A$ in the hyperfinite
$\text{II}_1$ factor $R$. From \cite{CFW}, it is unique up to isomorphisms of
$R$. While the carpenter problem is open in this case, significant progress has
been made in \cite{A-M0,A-M}. In this section we display classes of elements in
$A$ for which a positive solution can be given.

There are many ways of constructing the hyperfinite factor $R$. One that we will employ below is to let ${\bb Z}$ act on $L^\infty({\bb T})$ by irrational rotation, whereupon the crossed product $L^\infty({\bb T}) \rtimes {\bb Z}$ is isomorphic to $R$. In keeping with our earlier techniques, we will enlarge the crossed product and exploit the nonseparability of the resulting algebra.

As a vector space over the field of rationals ${\bb Q}$, the real field ${\bb
R}$ has an uncountable Hamel basis $\{\theta_\alpha\colon \ \alpha\in S\}$,
where $S$ is an uncountable index set. For integers $n_1,\ldots, n_{k+1}$, the
equation $\sum\limits^k_{i=1} n_i\theta_{\alpha_i}=n_{k+1}$ can only be
satisfied by taking all the $n_i$'s to be 0. Then the group $G$, defined to be
the set of all finite  sums $\{n_1\theta_{\alpha_1} +\cdots+
n_k\theta_{\alpha_k} \colon \ n_i\in {\bb Z}, \ \alpha_i\in S\}$ under addition,
can be expressed as $\sum\limits_{\alpha\in S} G_\alpha$, where $G_\alpha =
\{n\theta_\alpha\colon \ n\in {\bb Z}\} \cong {\bb Z}$. The  group $G$ acts on
$L^\infty({\bb T})$ by irrational rotation, and the crossed product
$L^\infty({\bb T})\rtimes G$ is a type $\text{II}_1$ factor and so has a
faithful trace. 

\begin{thm}\label{thm4.1}
Let $R$ be the separable hyperfinite $\text{\rm II}_1$ factor and let $A$ be the Cartan masa in $R$. Given $x\in A$, $0\le x\le 1$, there exists a trace preserving automorphism $\phi$ of $A$ and a projection $p\in R$ such that ${\bb E}_A(p) = \phi(x)$.
\end{thm}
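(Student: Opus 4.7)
The plan is to apply Lemma~\ref{lem2.1} inside the large non-separable crossed product $\tilde R=L^\infty({\bb T})\rtimes G$ built in the preamble, and then transport the resulting projection back to $R$ by means of an isomorphism that rearranges the Cartan masa. Let $\tilde A=L^\infty({\bb T})\subseteq\tilde R$; since every nonzero element of $G$ is irrational (by the Hamel basis property) and a single irrational rotation is already ergodic, $G$ acts freely and ergodically on ${\bb T}$, so $\tilde R$ is a type $\text{II}_1$ factor with Cartan masa $\tilde A$.

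First I would verify the hypothesis of Lemma~\ref{lem2.1} for the pair $(\tilde R,\tilde A)$. Since $\tilde A$ is separable as a von Neumann algebra, so is $\{e,\tilde A\}''$ for every projection $e\in\tilde R$, and therefore so is its corner $e\{e,\tilde A\}''e$. On the other hand, the uncountable family of translation unitaries $\{u_g\}_{g\in G}$ is pairwise $\|\cdot\|_2$-separated by $\sqrt2$, so $\tilde R$ is non-separable; any nonzero corner $e\tilde Re$ is likewise non-separable, since finitely many partial isometries between subprojections of $e$ and of $1$ suffice to recover $\tilde R$ from $e\tilde Re$. Hence $e\{e,\tilde A\}''e\ne e\tilde Re$, and Remark~\ref{rem2.3}(iii) yields the non-injectivity of $\Phi_e$. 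Applying Lemma~\ref{lem2.1} furnishes a projection $q\in\tilde R$ with ${\bb E}_{\tilde A}(q)=x$.

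Next I would expand $q$ as a Fourier series $q=\sum_{g\in G}a_gu_g$ with $a_g\in L^\infty({\bb T})$; Parseval's identity forces only countably many coefficients to be nonzero, so $q\in L^\infty({\bb T})\rtimes G_0$ for some countable subgroup $G_0\le G$. This $G_0$ still acts freely and ergodically on ${\bb T}$, and being countable abelian is amenable, so $L^\infty({\bb T})\rtimes G_0$ is a hyperfinite type $\text{II}_1$ factor with Cartan masa $L^\infty({\bb T})$, and hence is isomorphic to $R$. Invoking the Connes--Feldman--Weiss uniqueness of the Cartan masa in $R$, I would choose an isomorphism $\psi\colon L^\infty({\bb T})\rtimes G_0\to R$ sending $L^\infty({\bb T})$ onto $A$ (start with any isomorphism and then compose with an automorphism of $R$ aligning the two Cartans). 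Setting $p=\psi(q)$ gives a projection in $R$, and letting $\phi$ be the trace-preserving automorphism of $A$ obtained from $\psi|_{L^\infty({\bb T})}$ after identifying $A$ with $L^\infty({\bb T})$, one has ${\bb E}_A(p)=\phi({\bb E}_{L^\infty({\bb T})}(q))=\phi(x)$ because $\psi$ intertwines the two conditional expectations onto the Cartans.

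The step most likely to give trouble is the verification that every nonzero corner of $\tilde R$ is non-separable — a standard but slightly fiddly comparability argument in a type $\text{II}_1$ factor. The Fourier truncation to a countable subgroup is immediate, and both the identification of the smaller crossed product with $R$ and the Cartan rearrangement rest on well-known machinery (amenable crossed products are hyperfinite, plus Connes--Feldman--Weiss).
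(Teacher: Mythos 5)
Your argument is essentially the paper's own proof: the same nonseparable crossed product $L^\infty({\bb T})\rtimes G$, the same appeal to Lemma~\ref{lem2.1} (you merely make explicit, via Remark~\ref{rem2.3}(iii) and a corner-separability count, the noninjectivity of $\Phi_e$ that the paper leaves implicit), the same Fourier truncation to a countable subgroup, and the same use of Connes--Feldman--Weiss to realign the Cartan masas. The one detail to add is that your countable subgroup $G_0$ could a priori be trivial (if $q$ happened to lie in $L^\infty({\bb T})$), so before asserting ergodicity you should enlarge $G_0$ to contain a nonzero --- hence irrational --- element, exactly as the paper's parenthetical does for $H$.
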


\begin{proof}
Fix $x\in A$, $0\le x\le 1$, and let $\phi_1\colon \ A\to L^\infty({\bb T})$ be an isomorphism that takes the trace on $A$ to integration by Lebesgue measure on ${\bb T}$.
The algebra $L^\infty({\bb T})$ is a separable masa in the nonseparable factor
$L^\infty({\bb T})\rtimes G$, and so by Lemma~\ref{lem2.1} there is a projection
$q\in L^\infty({\bb T})\rtimes G$ so that ${\bb E}_{\phi_1(A)}(q) = \phi_1(x)$.
Elements of $L^\infty({\bb T})\rtimes G$ have Fourier series $\sum\limits_{g\in
G}a_gg$ for $a_g\in L^\infty({\bb T})$, only a countable number of whose terms
are nonzero. Thus there is a countable subgroup $H$ of $G$ so that $q\in
L^\infty({\bb T}) \rtimes H\subseteq L^\infty({\bb T}) \rtimes G$. Now
(after enlarging $H$ if needed to contain an irrational element)
$L^\infty({\bb T})\rtimes H$ is a copy of $R$, and so the uniqueness of Cartan
subalgebras in $R$ gives an isomorphism $\phi_2\colon \ L^\infty({\bb T})
\rtimes H\to R$ so that $\phi_2(L^\infty({\bb T})) =A$, and note that $\phi_2$
is trace preserving. Then
\begin{equation}\label{eq3.d}
{\bb E}_A(\phi_2(q)) = \phi_2({\bb E}_{L^\infty({\bb T})}(q))= \phi_2\phi_1(x).
\end{equation}
Set $p=\phi_2(q)$ and $\phi=\phi_2\phi_1$ to conclude that ${\bb E}_A(p) = \phi(x)$.
\end{proof}

\begin{rem}\label{rem4.2}
(i) \quad If $x$ is taken to be $\lambda1$ for any $\lambda\in [0,1]$, then
there exists a projection $p\in R$ such that ${\bb E}_A(p) =
\phi(\lambda1)=\lambda1$.

\medskip

\noindent (ii)\quad  In Theorem \ref{thm4.1}, an identical proof gives a more
general result: \ given $\{x_i\}^\infty_{i=1} \in A$, $0\le x_i\le 1$, there
exists an automorphism $\phi$ of $A$ and projections $p_i\in R$ so that ${\bb
E}_A(p_i) = \phi(x_i)$, $i\ge 1$.$\hfill\square$
\end{rem}

If $A$ is the Cartan masa in $R$ then, by uniqueness, the inclusions $A\subseteq R$ and $A\ovl\otimes A \subseteq R\ovl\otimes R$ are equivalent.
In the latter formulation we now obtain a large class of elements for which we can solve the carpenter problem.

\begin{thm}\label{thm4.3}
Let $A$ be the Cartan masa in the hyperfinite factor $R$ and let $x\in A$, $0\le x\le 1$. Then there exists a projection $p\in R\ovl\otimes R$ so that ${\bb E}_{A\ovl\otimes A}(p) = x\otimes 1$.
\end{thm}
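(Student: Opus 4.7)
The plan is to imitate the proof of Theorem~\ref{thm4.1}, but to enlarge \emph{only the second tensor factor} to a nonseparable type II$_1$ factor. The key observation is that $x\otimes 1$ already sits with $x\in A$ on the first coordinate, and any unital isomorphism preserves the identity on the second coordinate; so the compression step that in Theorem~\ref{thm4.1} forced an automorphism of the masa can here be arranged to fix $x\otimes 1$ on the nose, removing the ``$\phi$'' that appeared there.

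Concretely, let $M=L^\infty({\bb T})\rtimes G$ be the nonseparable type II$_1$ factor from Section~\ref{sec4}, where $G$ is the uncountable group built from a Hamel basis of ${\bb R}$ over ${\bb Q}$, so that $L^\infty({\bb T})$ is a separable masa of $M$. I would form the nonseparable type II$_1$ factor $N=R\ovltimes M$ together with its separable masa $B=A\ovltimes L^\infty({\bb T})$ (maximal abelian by the standard tensor product commutation theorem). Fix $x\in A$ with $0\le x\le 1$ and set $\tilde x=x\otimes 1_{L^\infty({\bb T})}\in B$.

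Apply Lemma~\ref{lem2.1} to $B\subseteq N$. For every nonzero projection $e\in N$, the separability of $B$ makes $eBe$ lie in a separable $L^1$-subset of $L^1(eNe)$, while $eNe$ inherits the nonseparability of $N$; by Remark~\ref{rem2.3}(i), $\Phi_e$ is then not injective. The lemma produces a projection $q\in N$ with ${\bb E}_B(q)=\tilde x$. Since $q$ has at most countably many nonzero Fourier coefficients along $G$, it lies in $R\ovltimes(L^\infty({\bb T})\rtimes H)$ for some countable subgroup $H\le G$, and after enlarging $H$ (if need be) to contain a single irrational rotation, the action of $H$ on ${\bb T}$ is free and ergodic, so $L^\infty({\bb T})\rtimes H\cong R$.

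By the CFW uniqueness of Cartan masas there is then a trace preserving isomorphism $\phi\colon L^\infty({\bb T})\rtimes H\to R$ with $\phi(L^\infty({\bb T}))=A$. Put $\Psi=\mathrm{id}_R\ovltimes \phi$ and $p=\Psi(q)\in R\ovltimes R$. Since $\Psi$ is a trace preserving isomorphism carrying $B$ onto $A\ovltimes A$, it intertwines the conditional expectations, so
\begin{equation*}
{\bb E}_{A\ovltimes A}(p)=\Psi({\bb E}_B(q))=\Psi(x\otimes 1_{L^\infty({\bb T})})=x\otimes\phi(1)=x\otimes 1,
\end{equation*}
as required. The main technical point to verify carefully is the density character argument in the application of Lemma~\ref{lem2.1}, namely that $eNe$ is nonseparable for every nonzero projection $e\in N$; this is precisely the argument used in Theorems~\ref{thm2.2} and \ref{thm4.1}, so no new idea should be needed there. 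The conceptual payoff is that, since the first tensor factor is left alone and the second enters only through $1$, the ``automorphism of $A$'' present in Theorem~\ref{thm4.1} collapses to the identity on $x\otimes 1$ and the carpenter problem is solved outright for the element $x\otimes 1\in A\ovltimes A$.
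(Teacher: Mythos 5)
Your argument is correct, and it shares the overall architecture of the paper's proof: enlarge to a nonseparable factor, apply Lemma~\ref{lem2.1}, cut the resulting projection down to a countably supported subalgebra, and transport back by an isomorphism that is the identity on the first tensor factor --- which is exactly why the automorphism of Theorem~\ref{thm4.1} disappears. Where you differ is in the choice of enlargement and, consequently, in how the hypothesis of Lemma~\ref{lem2.1} is verified. The paper enlarges the second factor to an uncountable tensor power $\ovl\bigotimes_{\alpha\in S}R_\alpha$ with abelian subalgebra $\ovl\bigotimes_{\alpha\in S}A_\alpha$; since that masa is itself nonseparable, the cardinality argument of Theorem~\ref{thm2.2} is unavailable, and noninjectivity of $\Phi_e$ is instead obtained from Remark~\ref{rem2.3}(iii) by locating any projection $e$ in a countable subtensor and observing that the uncountable abelian tail is a proper subalgebra of the corresponding factor tail. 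Your enlargement $R\ovltimes(L^\infty({\bb T})\rtimes G)$ keeps the masa $A\ovltimes L^\infty({\bb T})$ separable, so the simpler route through Remark~\ref{rem2.3}(i) applies verbatim (a separable masa cannot be $\|\cdot\|_1$-dense in any corner of a nonseparable $\mathrm{II}_1$ factor, every such corner being nonseparable by the amplification argument); the price is an explicit appeal to Connes--Feldman--Weiss uniqueness to identify $\bigl(L^\infty({\bb T})\rtimes H,\,L^\infty({\bb T})\bigr)$ with $(R,A)$, which the paper's version also uses, implicitly, when identifying a countable tensor power of $(R,A)$ with $(R,A)$. The descent via countable Fourier support and the final intertwining computation are the same in both versions, and your remark that maximal abelianness of the tensor product masa comes from the commutation theorem is the right justification (the paper needs the analogous fact for its uncountable tensor product).
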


\begin{proof}
Let $S$ be an uncountable set, and for each $\alpha\in S$ let $A_\alpha$ be a copy of $A$ inside $R_\alpha$, a copy of $R$. Form $M = R\ovl\otimes \big(\underset{\alpha\in  S}{\ovl\bigotimes} R_\alpha\big)$, and denote the abelian subalgebra $A\ovl\otimes \big(\underset{\alpha\in  S}{\ovl\bigotimes} A_\alpha\big)$ by $N$. We identify $R$ with $R\otimes 1\subseteq M$.

Let $e\in M$ be a nonzero projection. Then there exists a countable subset $T\subseteq S$ such that $e$ lies in $R\ovl\otimes \big(\underset{\alpha\in  T}{\ovl\bigotimes} R_\alpha\big)$. Then
\begin{equation}\label{eq3.d1}
eMe = e\left(R\ovl\otimes \left(\big(\underset{\alpha\in  T}{\ovl\bigotimes}
R_\alpha\right)\right) e~\ovl\otimes \left(\underset{\alpha\in S\backslash
T}{\ovl\bigotimes} R_\alpha\right)
\end{equation}
and
\begin{equation}\label{eq3.e}
e\{N,e\}'' e = e \left(\left\{ A\ovl\otimes \left(\underset{\alpha\in 
T}{\ovl\bigotimes} A_\alpha\right), e\right\}''\right) e~\ovl\otimes
\left(\underset{\alpha\in S\backslash T}{\ovl\bigotimes} A_\alpha\right).
\end{equation}
Thus $e\{N,e\}'' e\ne eMe$, and we can find a nonzero element $z\in eMe$ such
that ${\bb E}_{e\{N,e\}''e}(z)= 0$. It follows that ${\bb E}_N(z)=0$, so
Lemma~\ref{lem2.1} applies to give a projection $q\in M$ such that ${\bb E}_N(q)
=  x\otimes 1$. Now $q$ is supported by $R\ovl\otimes \big(\underset{\alpha\in
S_2}{\ovl\bigotimes} R_\alpha\big)$ for a countable subset $S_2$ of $S$. Then
there is an isomorphism $\phi\colon \ R\to \underset{\alpha\in
S_2}{\ovl\bigotimes} R_\alpha$ which maps $A$ to $\underset{\alpha\in
S_2}{\ovl\bigotimes} A_\alpha$, and $\theta = 1\otimes\phi\colon \ R\ovl\otimes
R \to R\otimes \big(\underset{\alpha\in  S_2}{\ovl\bigotimes} R_\alpha\big)$ is
also an isomorphism. If we define a projection $p\in R\ovl\otimes R$ by
$p=\theta^{-1}(q)$, then it follows that ${\bb E}_{A\ovl\otimes A}(p) = x\otimes
1$ as required.
\end{proof}

\section{The Schur--Horn theorem}\label{sec5}

\indent

Let $A$ be a self-adjoint $n\times n$ matrix, let
$\alpha_1\ge\alpha_2\ge\cdots\ge \alpha_n$ be a decreasing rearrangement of the
diagonal entries and let $\lambda_1\ge\lambda_2\ge\cdots\ge \lambda_n$ be a
decreasing ordering of the eigenvalues. A classical theorem of Schur \cite{Sch}
states that
\begin{equation}\label{eq5.1}
\sum^k_{i=1} \alpha_i \le \sum^k_{i=1}\lambda_i,\qquad 1\le k\le n,
\end{equation}
with equality when $k=n$. These inequalities can be used to define a partial
ordering on general $n$-tuples of real numbers by $\alpha \preceq \lambda$ if
\eqref{eq5.1} holds for the decreasing rearrangements of the entries, with
equality when $k=n$. A converse to Schur's theorem was proved by Horn in
\cite{Ho}: if two $n$-tuples $\alpha$ and $\lambda$ satisfy
$\alpha\preceq\lambda$ then there is a self-adjoint matrix $A$ so that the
diagonal is $\alpha$ and the eigenvalues are the entries of $\lambda$.
Collectively, these two results are known as the Schur--Horn theorem.
If we
denote by ${\bb E}_{{\bb D}_n}$ the conditional expectation of ${\bb M}_n$ onto
the diagonal ${\bb D}_n$, then there is an equivalent reformulation of the
Schur--Horn theorem as follows (see \cite{Ka1,Ka2}). If $\alpha$ and $\lambda$
are $n$-tuples of real numbers and $D_\alpha$ is the diagonal matrix with
entries from $\alpha$, then $\alpha \preceq \lambda$ if and only if there exists
a unitary matrix $U\in {\bb M}_n$ so that
\begin{equation}\label{eq5.2}
{\bb E}_{{\bb D}_n}(UD_\lambda U^*) =D_\alpha.
\end{equation}
When $\lambda$ has entries that are 0 or 1, then $D_\lambda$ is a projection,
and \eqref{eq5.2} reduces to a solution of the carpenter problem for the masa
${\bb D}_n$. An appropriate formulation of the Schur--Horn theorem for type
$\text{II}_1$ factors $M$ with a normalized trace $\tau$ was given by Arveson
and Kadison in \cite{A-K} (see also the work of Hiai \cite{Hi,Hi2}), as we now
describe. For each self-adjoint $a\in M$,
the distribution of $a$ is the unique Borel probability measure $m_a$ on ${\bb R}$ so that
\begin{equation}\label{eq5.3}
\int_{\bb R} t^n \ dm_a(t) = \tau(a^n),\qquad n=0,1,2,\ldots~.
\end{equation}
To each Borel subset $B$ of ${\bb R}$, there corresponds a spectral projection
$e_B$ of $a$, and it follows from \eqref{eq5.3} that $m_a(B) = \tau(e_B)$.
Moreover, $m_a$ is supported on the spectrum $\sigma(a)$ of $a$, and is called
the spectral
distribution of $a$. Following \cite[Definition 6.2]{A-K}, we say that a
compactly supported probability measure $n$ on ${\bb R}$ dominates a similar
probability measure $m$ on ${\bb R}$ if 
\begin{align}
\int_{\bb R} t\ dm(t) &= \int_{\bb R} t \ dn(t)\quad \text{and}\notag\\
\label{eq5.4}
\int^\infty_t m((s,\infty))\ ds &\le \int^\infty_t n((s,\infty))
 \ ds,\qquad t\in
{\bb R}.
\end{align}
We deviate slightly from \cite{A-K} which uses closed intervals $[s,\infty)$,
but this makes no difference since $m((s,\infty))$ and $m([s,\infty))$ can only
be unequal on a countable set of $s$-values. For $a,b\in M_{s.a.}$, we can then
define $a\preceq b$ to mean $m_a\preceq m_b$.

This relation $a\preceq b$ can be rewritten to resemble more closely the classical condition~\eqref{eq5.1}
for matrices.
Instead of the eigenvalue sequence of a matrix, for $a=a^*\in M_{s.a.}$, we have the eigenvalue function,
which is the real--valued, monotone nonincreasing, right--continuous function
\[
\mu_t(a)=\inf\{s\in{\bb R}: m_a((s,\infty))\le t\}
\]
of $t\in[0,1)$.
This is the unique real--valued, nonincreasing, right--continuous function
 so that we have $a=\int_0^1\mu_t(a)\,dE(t)$ for some projection--valued
measure $E$ on $[0,1)$ such that $\tau(E([0,t)))=t$ (the actual measure $E$ is obtained by reparameterizing $e_a$).
This eigenvalue function $\mu_t(a)$ is analogous to the decreasing eigenvalue sequence, and we have,
for example,
\[
\tau(a^k)=\int_0^1(\mu_t(a))^k\,dt,\quad(k\ge1).
\]
It is, after a change of variable, the function defined by Murray and von Neumann~\cite[Lemma 15.2.1]{MvN}
and used in various forms by several authors (e.g.\ \cite{Kam}, \cite{Petz}, \cite{BL06}).
In terms of eigenvalue functions, the relation $a\preceq b$ is characterized by the inequalities
\[
\int_0^t\mu_s(a)\,ds \le \int_0^t\mu_s(b)\,ds
\]
for all $0\le t\le 1$ with equality at $t=1$.
This
follows by combining Theorem~2.1 of \cite{A-M0} (which comes from \cite{Hi}) with
Proposition~6.1 of \cite{A-K}.

The analog of Schur's theorem was
established in \cite[Theorem 7.2]{A-K}:

\begin{thm}\label{thm5.1}
If $A$ is a masa in a type $\text{\rm II}_1$ factor $M$, then ${\bb E}_A(x)\preceq x$ for all self-adjoint elements $x\in M$.
\end{thm}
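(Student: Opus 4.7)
The plan is to use the characterization of $\preceq$ in terms of eigenvalue functions given at the end of the excerpt: it suffices to show
\[
\int_0^t \mu_s({\bb E}_A(x))\,ds \le \int_0^t \mu_s(x)\,ds, \qquad 0\le t\le 1,
\]
with equality at $t=1$. The equality at $t=1$ is immediate, since $\int_0^1\mu_s(a)\,ds=\tau(a)$ and ${\bb E}_A$ preserves the trace, so $\tau({\bb E}_A(x))=\tau(x)$.

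For $0<t<1$, the key tool I would use is the Ky Fan--type variational formula
\[
\int_0^t \mu_s(a)\,ds \;=\; \sup\bigl\{\tau(aq)\colon q\in M,\ 0\le q\le 1,\ \tau(q)=t\bigr\},
\]
valid for self-adjoint $a\in M$ in a finite factor; this supremum is attained and the supremum over projections of trace $t$ gives the same value. One way to see this is to decompose $a=\int_0^1 \mu_s(a)\,dE(s)$ via a spectral resolution with $\tau(E([0,s)))=s$, and then verify that the projection $E([0,t))$ achieves the value; this decomposition exists because the spectral algebra of $a$ is a diffuse abelian von Neumann algebra (or a sub-masa in which we may cut atoms to obtain projections of exactly the right trace).

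Once the variational formula is in hand, the argument is short. Set $y={\bb E}_A(x)\in A$. Since $A$ is a masa in a type II$_1$ factor it is diffuse, so the spectral theorem applied to $y$ inside $A$ yields a projection $p\in A$ with $\tau(p)=t$ satisfying
\[
yp \ge \mu_t(y)\,p, \qquad y(1-p)\le \mu_t(y)(1-p),
\]
and for such a $p$,
\[
\tau(yp)=\int_0^t \mu_s(y)\,ds.
\]
Now because $p\in A$ and ${\bb E}_A$ is trace-preserving,
\[
\tau(yp)=\tau({\bb E}_A(x)p)=\tau({\bb E}_A(xp))=\tau(xp).
\]
Applying the variational formula to $a=x$ with the projection $p\in M$ of trace $t$,
\[
\int_0^t\mu_s(y)\,ds=\tau(xp)\le \int_0^t\mu_s(x)\,ds,
\]
which is the desired inequality.

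The main technical point, and the only place where care is needed, is the existence of a projection $p\in A$ with $\tau(p)$ exactly equal to $t$ that realizes $\int_0^t\mu_s(y)\,ds=\tau(yp)$; this is where one uses that $A$ is diffuse, cutting within the spectral projection of $y$ for the atom at $\mu_t(y)$ (if any) to adjust the trace to the desired value. Everything else is the trace-preservation identity $\tau({\bb E}_A(x)p)=\tau(xp)$ for $p\in A$ together with the standard variational characterization of the partial sums of the eigenvalue function.
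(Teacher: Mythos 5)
The paper does not actually prove Theorem~\ref{thm5.1}; it is quoted verbatim from Arveson--Kadison \cite[Theorem 7.2]{A-K}, so there is no in-paper argument to compare against. Your proof is correct and is essentially the standard Ky Fan maximum principle argument (the same circle of ideas used in \cite{Hi}, \cite{Kam}, \cite{Petz} and in \cite{A-K} itself). The two halves you need are exactly right: (a) for $y={\bb E}_A(x)$ and any $0<t<1$, a projection $p$ with $\tau(p)=t$ realizing $\tau(yp)=\int_0^t\mu_s(y)\,ds$ can be built inside $A$ by taking the spectral projection $e_{(\mu_t(y),\infty)}(y)$ and padding it with a subprojection of the atom $e_{\{\mu_t(y)\}}(y)$ --- this uses that $A$, being a masa in a II$_1$ factor, is diffuse, and you correctly flagged this as the delicate point; (b) the module property $\tau({\bb E}_A(x)p)=\tau(xp)$ for $p\in A$, plus the upper bound $\tau(xp)\le\int_0^t\mu_s(x)\,ds$ for any projection of trace $t$. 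For (b) with merely self-adjoint $x$, the cleanest route is to reduce to $x\ge0$ by adding $c1$ (both sides shift by $ct$) and then use $\tau(xp)=\tau(pxp)=\int_0^t\mu_s(pxp)\,ds\le\int_0^t\mu_s(x)\,ds$ via \cite[Lemma 2.5]{F-K}; your statement of the variational formula with general $0\le q\le1$ is true but more than you need, and its proof sketch is the only part you leave partly implicit. The equality at $t=1$ from trace preservation is immediate, and the paper's stated equivalence between $\preceq$ and the eigenvalue-function inequalities licenses the whole reduction. In short: a correct, self-contained proof of a result the paper only cites.
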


Let ${\cl O}(x)$ denote the norm closure of the unitary orbit of a self-adjoint $x\in M$.
Then $y\in{\cl O}(x)$ if and only if $x$ and $y$ have the same spectral data, i.e., $\mu_t(x)=\mu_t(y)$ for all $t\in[0,1)$
or, equivalently $m_x=m_y$.
This was shown by Kamei~\cite{Kam}, and also in~\cite{A-K}.

The analog of Horn's theorem is then the following problem.
If $A$ is a masa in a type $\text{II}_1$ factor $M$ and $x\in M_{s.a.}$ and $y\in A_{s.a.}$ satisfy $y\preceq x$,
does $y$ lie in ${\bb E}_A({\cl O}(x))$?
In attempting to answer this question, it is unchanged by adding multiples of the identity to $x$ and $y$, and so it suffices to assume that $x,y\ge 0$.
For $x\in M^+$, the eigenvalue function's values $\mu_t(x)$ are actually the generalized $s$--numbers of
\cite{MvN} (see the account in \cite{F-K}), which are defined for
$x\in M$ and $t\ge 0$ by 
\begin{equation}\label{eq5.6}
\mu_t(x) = \inf\{\|xe\|\colon \ e\in {\mathcal P}(M), \ \tau(e)\ge 1-t\},
\end{equation}
where ${\mathcal P}(M)$ denotes the set of projections in $M$.
This was established in \cite[Proposition 2.2]{F-K}.
In particular, we have $\mu_0(x) = \|x\|$.

For $x\in M^+$ we will need the distribution function $\lambda_t(x)$,
defined in \cite[Definition 1.3]{F-K}
to be 
\begin{equation}\label{eq5.5}
\lambda_t(x) = m_x(t,\infty) = \tau(e_{(t,\infty)}(x)),
\end{equation}
so that we have
\begin{equation}\label{eq5.7}
\mu_t(x) = \inf\{s\ge 0\colon \ \lambda_s(x) \le t\}.
\end{equation}
Finally we will need the Ky Fan norms
\begin{equation}\label{eq5.8}
\|x\|_{(t)} = \int^t_0 \mu_s(x)\ ds,\qquad 0\le t\le 1.
\end{equation}
It is not obvious that these are norms for $t>0$, but this fact is established
in \cite[Theorem 4.4 (ii)]{F-K}.

Since $x\in {\mathcal{O}}(y)$ means that the spectral data of $x$ and $y$ agree, we have:
\begin{lem}\label{lem5.2}
Let $M$ be a type $\text{\rm II}_1$ factor. Then the following are equivalent
for elements $x,y\in M^+$.
\begin{itemize}
\item[\rm (i)] $x\in {\mathcal{O}}(y)$.
\item[\rm (ii)] $\|x\|_{(t)} = \|y\|_{(t)}, \ 0\le t\le 1$.
\end{itemize}
\end{lem}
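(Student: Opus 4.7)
The plan is to reduce the statement to the already-established fact (attributed to Kamei and to Arveson--Kadison in the discussion preceding the lemma) that for $x,y\in M_{s.a.}$, one has $x\in\mathcal{O}(y)$ if and only if $\mu_s(x)=\mu_s(y)$ for all $s\in[0,1)$. Once this is in hand, the lemma becomes the purely measure-theoretic assertion that two right-continuous nonincreasing functions on $[0,1)$ agree pointwise if and only if their indefinite integrals agree on $[0,1]$.

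For the direction $(i)\Rightarrow(ii)$, I would simply observe that if $\mu_s(x)=\mu_s(y)$ for all $s\in[0,1)$, then integrating from $0$ to $t$ gives $\|x\|_{(t)}=\|y\|_{(t)}$ for all $t\in[0,1]$ directly from the definition \eqref{eq5.8}. For the direction $(ii)\Rightarrow(i)$, I would argue as follows. Assume $\int_0^t\mu_s(x)\,ds=\int_0^t\mu_s(y)\,ds$ for every $t\in[0,1]$. Since $\mu_\cdot(x)$ and $\mu_\cdot(y)$ are bounded (hence locally integrable) functions on $[0,1)$, the Lebesgue differentiation theorem yields $\mu_s(x)=\mu_s(y)$ for almost every $s\in[0,1)$. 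Both eigenvalue functions are right-continuous and nonincreasing on $[0,1)$, so for any fixed $t\in[0,1)$ we may choose a sequence $s_n\downarrow t$ of points at which the two agree, and conclude $\mu_t(x)=\lim_n\mu_{s_n}(x)=\lim_n\mu_{s_n}(y)=\mu_t(y)$.

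Having established $\mu_s(x)=\mu_s(y)$ for all $s\in[0,1)$, the characterization of $\mathcal{O}(y)$ recalled above yields $x\in\mathcal{O}(y)$, completing the proof. There is no real obstacle here; the only point that might require a word of justification is the passage from almost-everywhere equality to everywhere equality, which rests on the right-continuity of the eigenvalue function as noted in the paragraph preceding \eqref{eq5.6}.
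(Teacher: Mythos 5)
Your proof is correct and follows the same route as the paper, which simply notes that membership in $\mathcal{O}(y)$ is equivalent to equality of the eigenvalue functions $\mu_t(x)=\mu_t(y)$ and treats the equivalence with the Ky Fan norms as immediate. Your Lebesgue-differentiation plus right-continuity argument correctly supplies the detail the paper leaves implicit for the direction (ii)$\Rightarrow$(i).
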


We now use this to investigate the Schur--Horn theorem.

Let $A$ be a masa in a type $\text{II}_1$ factor $M$, let $x\in A^+$, $0\le x\le 1$, and $z\in M^+$ be elements such that $x\preceq z$. Then define
\begin{equation}\label{eq5.11}
\Delta_{x,z} = \{y\in M\colon \ 0\le y\le 1, \  y\preceq z,\  {\bb E}_A(y) = x\},
\end{equation}
which is nonempty since it contains $x$. From the Ky Fan norm
characterization above, $\Delta_{x,z}$ is convex and it is $w^*$-compact from
\cite[Corollary 3.5]{A-M0}. By the Krein--Milman theorem, $\Delta_{x,z}$ has extreme
points. Recall the definition of $\Phi_e$ from \eqref{eq2.1}.

\begin{thm}\label{thm5.3}
Let $A$ be a masa in a type $\text{\rm II}_1$ factor $M$ and suppose that
$\Phi_e$ is noninjective for each nonzero projection $e\in M$. Let $x\in A$,
$0\le x\le 1$ and suppose that $x\preceq z$ for some element $z\in M^+$. Then
every extreme point of $\Delta_{x,z}$ lies in ${\cl O}(z)$.
\end{thm}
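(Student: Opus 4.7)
The plan is to argue by contradiction in the spirit of Lemma~\ref{lem2.1}: assume $y\in\Delta_{x,z}$ is extreme with $y\notin\cl O(z)$, and construct a nonzero self-adjoint $w$ such that $y\pm w\in\Delta_{x,z}$, contradicting extremality. By Lemma~\ref{lem5.2} and the Ky~Fan norm characterization of $\preceq$, the failure $y\notin\cl O(z)$ means the nonnegative continuous function $\delta(t):=\|z\|_{(t)}-\|y\|_{(t)}$, which vanishes at $t=0$ and at $t=1$ (the latter because $y\preceq z$ with equality at $t=1$), is bounded below by some $\delta_0>0$ on a compact subinterval $[a',b']\subset(0,1)$.

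First I would handle the edge case where $y$ is itself a projection: comparing $\|y\|_{(t)}=\min(t,\tau(y))$ with $\|z\|_{(t)}$, using $\tau(y)=\tau(z)$, forces $z$ to be a projection of the same trace, so $y\in\cl O(z)$, contradicting our assumption. Hence $y$ has nontrivial spectrum in $(0,1)$, and I can choose $0<\alpha<\beta<1$ so that $e:=\chi_{(\alpha,\beta)}(y)$ is nonzero and its associated ``spectral window'' $[s_1,s_2]$, where $s_1=\tau(\chi_{[\beta,1]}(y))$ and $s_2=\tau(\chi_{(\alpha,1]}(y))$, lies inside $[a',b']$. The noninjectivity hypothesis on $\Phi_e$ then furnishes a nonzero self-adjoint $w\in eMe$ with $\bb E_A(w)=0$, and I would scale it so that $\|w\|\le\min(\alpha,1-\beta,\delta_0/\tau(e))$.

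Verifying $y\pm w\in\Delta_{x,z}$ uses Lemma~\ref{lem2.1}'s argument for $\bb E_A(y\pm w)=x$ and $0\le y\pm w\le 1$. For the majorization $y\pm w\preceq z$, the key observation is that $y$ commutes with $e$ and $w\in eMe$, so $y\pm w$ is block diagonal with respect to $e$ and $1-e$; consequently $\mu_s(y\pm w)=\mu_s(y)$ for $s\notin[s_1,s_2]$ and $|\mu_s(y\pm w)-\mu_s(y)|\le\|w\|$ for $s\in[s_1,s_2]$. Integrating gives $|\|y\pm w\|_{(t)}-\|y\|_{(t)}|\le\tau(e)\|w\|\le\delta_0\le\delta(t)$ whenever $t\in[s_1,s_2]\subseteq[a',b']$ and is zero for other $t$, so $\|y\pm w\|_{(t)}\le\|z\|_{(t)}$ for all $t\in[0,1]$, with equality at $t=1$ since $\tau(w)=0$. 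The main obstacle is the spectral-window alignment in the previous paragraph: one must select $(\alpha,\beta)$ so that $[s_1,s_2]$ lies within the slack region $[a',b']$, which is a delicate but elementary selection argument using the distribution function of $y$ (and may need minor adjustment when $y$ has atoms at critical levels).
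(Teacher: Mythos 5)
Your overall strategy --- perturb a putative extreme point $y\notin{\cl O}(z)$ by a trace--zero self--adjoint $w$ supported on a spectral window of $y$, and check that the Ky Fan inequalities survive --- is the same as the paper's, but the step where you verify $y\pm w\preceq z$ has a genuine gap. The claim that $\mu_s(y\pm w)=\mu_s(y)$ for $s\notin[s_1,s_2]$ is false. Writing $y\pm w$ as the orthogonal sum of the blocks $\chi_{[\beta,1]}(y)y$, $eye\pm w$ and $\chi_{[0,\alpha]}(y)y$, the middle block has spectrum in $[\alpha-\|w\|,\beta+\|w\|]$, which overlaps the spectral ranges of the outer blocks; under the decreasing rearrangement these values interleave, so $\mu_s(y\pm w)$ can strictly exceed $\mu_s(y)$ for $s$ just below $s_1$. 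The effect is worst when $y$ has an atom at a level in $[\alpha,\beta]$: the rearrangement relocates the perturbation to the endpoints of the atom's entire $s$--interval, which need not lie in $[a',b']$ (for instance, if $\mu_s(y)$ is constant on an interval strictly containing $[a',b']$, then no choice of $(\alpha,\beta)$, nor any subprojection of the atom, confines the change in $\mu_s$ to $[a',b']$). Consequently $\|y+w\|_{(t)}$ can exceed $\|y\|_{(t)}$ at values of $t$ where $\|y\|_{(t)}=\|z\|_{(t)}$ exactly, and then $y+w\notin\Delta_{x,z}$; your "delicate but elementary selection argument" cannot repair this, because the obstruction is not in choosing the window but in the absence of any slack where the spillover lands. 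A smaller error: $y$ a projection with $y\preceq z$ does not force $z$ to be a projection (take $z=2q$ with $\tau(q)=\tau(y)/2$), so your edge case is not disposed of as claimed.

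The missing idea --- and the heart of the paper's proof --- is to anchor the construction not to an arbitrary interval where the slack $\|z\|_{(t)}-\|b\|_{(t)}$ exceeds some $\delta_0$, but to the set $\Lambda$ where this continuous function attains its \emph{maximum}. Taking $t_0\le t_1$ to be the endpoints of a maximal subinterval of $\Lambda$, the extremality of $t_0$ and $t_1$ forces the pointwise inequalities $\mu_{t_0-\vp_0}(z)\ge\mu_{t_0-\vp_0}(b)+\delta$ and $\mu_{t_1+\vp_1}(z)\le\mu_{t_1+\vp_1}(b)-\delta$ of \eqref{eq:*11}, and the spectral window is then taken to be $[\mu_{t_1+\vp_1}(b),\mu_{t_0-\vp_0}(b)]$. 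These inequalities are exactly what absorb the spillover: on the region just to the left of $t_0-\vp_0$ one bounds $\mu_s(b+w)\le\mu_s(b)+\delta/2\le\mu_{t_0-\vp_0}(b)+\delta\le\mu_{t_0-\vp_0}(z)\le\mu_s(z)$, i.e.\ one compares against the eigenvalue function of $z$ itself rather than against that of $b$; further to the left one shows $\mu_s(b+w)\le\mu_s(b)$ exactly via \cite[Prop.~2.2]{F-K}, and a symmetric argument with the complementary Ky Fan quantity $\int_t^1\mu_s\,ds$ handles the right end. Without some analogue of \eqref{eq:*11}, your interval $[a',b']$ carries no information about the eigenvalue functions of $z$ and $y$ near its endpoints, and the verification that $y\pm w\preceq z$ cannot be closed.
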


\begin{proof}
Fix an extreme point $b$ of $\Delta_{x,z}$. Then $\|b\|_{(t)} \leq \|z\|_{(t)}$ for
$0\leq t\leq 1$ since $b \preceq z$. To derive a contradiction, suppose that
there exists $t$ so that $\|b\|_{(t)} < \|z\|_{(t)}$, for otherwise
Lemma~\ref{lem5.2} gives the result.  Since $\|b\|_{(0)} = \|z\|_{(0)}=0$, and
$\|b\|_{(1)} = \|z\|_{(1)} = \tau(b)$, this value of $t$ lies in (0,1). The
function $\|z\|_{(t)} - \|b\|_{(t)}$ is continuous on [0,1] and so attains its
maximum value on a closed nonempty subset $\Lambda\subseteq [0,1]$. Let $t_0$ be
the least value in $\Lambda$, and let $t_1\in\Lambda$ be the largest value for
which $[t_0,t_1] \subseteq\Lambda$. Since $0,1\notin\Lambda$, we have $0<t_0\le
t_1<1$, and it is possible to have $t_0=t_1$. By continuity, there exist
$\delta_0>0$ and $\vp <\min\{t_0,1-t_1\}$ so that
\begin{equation}\label{eq:*3}
\|z\|_{(t)} - \|b\|_{(t)}>\delta_0\qquad(t\in [t_0-\vp, t_1+\vp]).
\end{equation}
On $(t_0-\vp,t_0)$, the
inequality $\mu_t(z)\le \mu_t(b)$ cannot hold everywhere because we would then
have $\|z\|_{(t_0-\vp)} - \|b\|_{(t_0-\vp)}\ge \|z\|_{(t_0)} - \|b\|_{(b_0)}$,
implying $t_0-\vp\in \Lambda$
and contradicting the minimal choice of $t_0$.
Thus there exist $\delta_1>0$ and $\vp_0\in
(0,\vp)$ so that
\begin{equation}\label{eq:*1}
\mu_{t_0-\vp_0}(z)\ge \mu_{t_0-\vp_0}(b)+\delta_1.
\end{equation}
Similarly,
if we had $\mu_t(z) \ge \mu_t(b)$ for all $t\in (t_1,t_1+\vp)$ then it would
follow that $[t_0,t_1+\vp]\subseteq \Lambda$, contradicting the maximal choice
of $t_1$. Thus there exists $\vp_1\in (0,\vp)$ so that $\mu_{t_1+\vp_1}(z)
\le\mu_{t_1+\vp_1}(b)-\delta_2$ for some $\delta_2>0$. Clearly, the values of
$\delta_0,\delta_1$ and $\delta_2$ can be replaced by their minimum value which
we denote by $\delta>0$. Thus we have the inequalities
\begin{equation}\label{eq:*33}
\|z\|_{(t)} - \|b\|_{(t)}>\delta,\qquad t\in [t_0-\vp, t_1+\vp]
\end{equation}
and
\begin{equation}\label{eq:*11}
\mu_{t_0-\vp_0}(z)\ge \mu_{t_0-\vp_0}(b)+\delta,\qquad
\mu_{t_1+\vp_1}(z)
\le\mu_{t_1+\vp_1}(b)-\delta.
\end{equation}

Now consider the interval $(t_0-\vp_0,t_1+\vp_1)$ to which we will associate a
nonzero spectral projection $e$ of $b$. There are two cases to consider. Suppose
first that $\mu_s(b)$ takes at least three distinct values on this interval.
Then there are points $\alpha_1<\alpha_2<\alpha_3 \in (t_0-\vp_0, t_1+\vp_1)$ so
that $\mu_{\alpha_1}(b) > \mu_{\alpha_2}(b) > \mu_{\alpha_3}(b)$. Then the open
interval $I = (\mu_{t_1+\vp_1}(b), \mu_{t_0-\vp_0}(b))$ contains a value
$\mu_{\alpha_2}(b)$ in the spectrum $\sigma(b)$ of $b$. Secondly, if $\mu_s(b)$
takes at most two distinct values on
$(t_0-\vp_0, t_1+\vp_1)$ then 
there exists an interval on which $\mu_s(b)$ is constant, taking a value in the interval $I$.
Now from~\eqref{eq5.5} and~\eqref{eq5.7}, we see that this value in $I$
lies in the point spectrum of $b$. In both cases the
spectral projection $e$ of $b$ for the interval $[\mu_{t_1+\vp_1}(b),
\mu_{t_0-\vp_0}(b)]$ is nonzero. 
By hypothesis there exists a nonzero self-adjoint element $w\in
eMe$ so that ${\bb E}_A(w)=0$, and by scaling we may assume that
$\|w\|<\delta/2$. Note that ${\bb E}_A(b\pm w)=x$. We now establish that $b\pm
w\in \Delta_{x,z}$ which will contradict the assumption that $b$ is an extreme
point. By symmetry we need only consider $b+w$. There are several cases.

Since $\mu_s(b)$ is nonincreasing and right continuous, there exists $r\in
[0,t_0-\vp_0]$ so that
\begin{equation}\label{eq:*2}
\mu_s(b) \le \mu_{t_0-\vp_0}(b) + \delta/2,\qquad s\in[r,t_0-\vp_0],
\end{equation}
while $\mu_s(b)>\mu_{t_0-\vp_0}(b) + \delta/2$ for $s\in [0,r)$.
 If $t\in 
[r,t_0-\vp_0]$ then
\begin{align}
\|b+w\|_{(t)} &= \|b+w\|_{(r)} + \int^t_r \mu_s(b+w)\ ds\le \|b\|_{(r)} +
\int^t_r \mu_s(b) +\delta/2 \ ds\notag\\
&\leq \|z\|_{(r)} +\int_r^t\mu_{t_0-\vp_0}(b)+\delta\ ds
\le \|z\|_{(r)}+\int_r^t\mu_{t_0-\vp_0}(z) \ ds \notag\\
\label{eq5.11a}
&\le \|z\|_{(r)} +\int^t_r \mu_s(z) \ ds = \|z\|_{(t)},
\end{align}
where we have used, respectively,
$\|w\|\le\delta/2$, \eqref{eq:*2}, \eqref{eq:*11}, and the fact that $\mu_s(z)$
is nonincreasing.

If $r=0$, then \eqref{eq5.11a} has already handled the interval
$[0,t_0-\varepsilon_0]$, so we assume that $r>0$ and we
now examine the interval $[0,r)$. Fix a value $s$ in this interval, and let $f$
be the spectral projection of
$b$ for the interval $[0,\mu_s(b)]$. By \cite[Prop. 2.2]{F-K}, $\tau(f)\geq
1-s$. Thus $\mu_s(b+\delta e/2)\leq \|(b+\delta e/2)f\|$. Since $e$ is supported on
$\sigma(b)\cap[0,\mu_s(b)-\delta/2]$, we have
$\|(b+\delta e/2)f\|=\|bf\|=\mu_s(b)$, which, together with \cite[Lemma 2.5(iii)]{F-K} implies
 that $\mu_s(b+w)\leq
\mu_s(b+\delta e/2)\leq \mu_s(b)$. It
follows by integrating these inequalities that $\|b+w\|_{(t)}\leq
\|b\|_{(t)}\leq \|z\|_{(t)}$ for $t\in
[0,r)$.

On $[t_0-\vp_0, t_1+\vp_1]$, using $\|w\|\le\delta/2$ and~\eqref{eq:*33}, we
have
\begin{equation}\label{eq5.12}
\|b+w\|_{(t)} \le \|b\|_{(t)} +\delta/2 \le \|z\|_{(t)},
\end{equation}
so it remains to consider $[t_1+\vp_1,1]$, which is handled in a similar manner
to $[0,t_0-\vp_0]$. 
Let $r'\in(t_1+\varepsilon_1,1]$ be the maximum value so that
$\mu_s(b)>\mu_{t_1+\varepsilon_1}(b)-\delta/2$ for $s\in
[t_1+\varepsilon_1, r')$. This value exists since $\mu_s(b)$ is monotone
nonincreasing and right continuous.
First consider the case $r'<1$.
Then the inequality $\mu_s(b)\leq
\mu_{t_1+\varepsilon_1}(b)-\delta/2$ holds for $s\in
[r',1]$.  
We take $s\in [r', 1]$. Then the spectral projection $g$ of $b$ for
the interval $[0,\mu_s(b)]$ is orthogonal to $e$ and has trace at least
$1-s$, by \cite[Prop. 2.2]{F-K}. Thus 
\begin{equation}\label{eqA}
\mu_s(b+w)=\|(b+\delta e/2)g\|=\|bg\|=\mu_s(b),
\end{equation}
for $s\in [r', 1]$. Thus if $t\in [r', 1]$,
then (see \cite[Prop. 2.7]{F-K})
\begin{align}\int_t^1
\mu_s(b+w)\ ds&=\int_t^1\mu_s(b)\ ds=\tau(b)-\|b\|_{(t)}\notag\\
\label{eqC}&\geq
\tau(z)-\|z\|_{(t)}=\int_t^1 \mu_s(z)\ ds.
\end{align}
Since $\tau(b+w)=\tau(b)=\tau(z)$, we have $\|b+w\|_{(t)}\leq \|z\|_{(t)}$ on
this interval. 

Let
$s\in [t_1+\varepsilon_1, r')$. Then 
\begin{align}
\mu_{s}(b+w)&\geq
\mu_s(b-\delta e/2)\geq
\mu_s(b)-\delta/2\notag\\
\label{eqB}&\geq\mu_{t_1+\varepsilon_1}(b)-\delta\geq
\mu_{t_1+\varepsilon_1}(z)\geq \mu_{s}(z).
\end{align}
If
$t\in [t_1+\varepsilon_1, r')$, then 
\begin{align}
\int_t^1
\mu_s(b+w)\ ds&=\int_t^{r'}\mu_s(b+w)\ ds+\int_{r'}^1\mu_s(b+w)\ ds\notag\\
&\geq
\int_t^{r'}\mu_s(z)\ ds+\int_{r'}^1\mu_s(z)\ ds=\int_t^1\mu_s(z)\ ds, \label{eq:5.22}
\end{align}
where we have used \eqref{eqC} with $t=r'$.
Since $\tau(b+w)=\tau(b)=\tau(z)$, we have the inequality $\|b+w\|_{(t)}\leq
\|z\|_{(t)}$
on this interval also.

This shows that $b+w\in \Delta_{x,z}$, completing the proof in the case $r'<1$.
If $r'=1$, then the proof of~\eqref{eq:5.22} for $t\in [t_1+\varepsilon_1,1)$ is exactly as
before, and this suffices to prove $b+w\in \Delta_{x,z}$.
\end{proof}

As a consequence of Theorem~\ref{thm5.3}, we can immediately deduce two
corollaries whose proofs  are so similar to those of Theorems~\ref{thm3.1},
\ref{thm3.3} and \ref{thm4.1} that we omit the details. The only minor change is
that when passing from the augmented algebras back to the original ones, instead
of observing that a certain element is countably supported we need this for a
countable set of elements, which is of course true.

\begin{cor}\label{cor5.4}
Let $A$ be either a generator masa or the radial masa in $L({\bb F}_n)$, $2\le
n<\infty$. If $x\in A^+$, $z\in L({\bb F}_n)^+$ and $x\preceq z$ then $x\in {\bb
E}_A({\cl O}(z))$.
\end{cor}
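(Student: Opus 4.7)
My plan is to run the arguments of Theorems~\ref{thm3.1} and~\ref{thm3.3} with Theorem~\ref{thm5.3} playing the role of Lemma~\ref{lem2.1}. The key new observation is that any trace-preserving $*$-embedding of finite von Neumann algebras preserves spectral distributions and hence preserves membership in orbit closures, so an element $y'$ supplied by Theorem~\ref{thm5.3} in a larger ambient factor can be pulled back into $L({\bb F}_n)$ while retaining both the conditions ${\bb E}_A(y') = x$ and $y' \in {\cl O}(z)$.

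\medskip

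\noindent\textit{Generator masa.} Fix $i$ and pick $j \ne i$ in $\{1,\ldots,n\}$. I would first embed $L({\bb F}_n)$ into $L({\bb F}_S)$ for an uncountable set $S \supseteq \{1,\ldots,n\}$. The additional generators are free from $L({\bb F}_n)$, so $A_i$ remains a masa in $L({\bb F}_S)$, and it is separable by~\cite{Po1}. A cardinality argument as in the proof of Theorem~\ref{thm2.2} then forces $\Phi_e$ to be noninjective for every nonzero projection $e \in L({\bb F}_S)$. Since $x \preceq z$ persists in the enlargement, Theorem~\ref{thm5.3} together with the Krein--Milman theorem produces $y' \in \Delta_{x,z}$ with ${\bb E}_{A_i}(y') = x$ and $m_{y'} = m_z$. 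Only countably many Fourier coefficients of $y'$ are nonzero, so $y' \in L({\bb F}_T)$ for a countable $T \supseteq \{1,\ldots,n\}$. Listing the elements of $T \setminus \{i\}$ as $t_2, t_3, \ldots$, define $\phi\colon L({\bb F}_T) \to L({\bb F}_n)$ on generators by $\phi(g_i) = g_i$ and $\phi(g_{t_k}) = g_j^{k-1} g_i g_j^{1-k}$ for $k \ge 2$; as in Theorem~\ref{thm3.1}, the image generators form a free set inside $\langle g_i, g_j\rangle$, so $\phi$ extends to a trace-preserving $*$-embedding that is the identity on $A_i$. Then ${\bb E}_{A_i}(\phi(y')) = \phi({\bb E}_{A_i}(y')) = x$, and $m_{\phi(y')} = m_{y'} = m_z$ gives $\phi(y') \in {\cl O}(z)$ in $L({\bb F}_n)$.

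\medskip

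\noindent\textit{Radial masa.} Following Theorem~\ref{thm3.3}, $B$ and $v_1v_2Bv_2v_1$ are free subalgebras of $L({\bb F}_n)$, so their generated von Neumann algebra $N$ is the reduced free product of two diffuse abelian tracial algebras, and is therefore isomorphic to $L({\bb F}_2)$ with $B$ corresponding to one of its generator masas. The element $z$ need not lie in $N$, but I would choose $z' \in B^+ \subseteq N^+$ with $m_{z'} = m_z$; then $x \preceq z'$ since $\preceq$ depends only on the spectral distribution. Applying the generator masa case (just proved) to $B \subseteq N$ with the pair $(x, z')$ yields $y \in N \subseteq L({\bb F}_n)$ satisfying ${\bb E}_B(y) = x$ and $m_y = m_{z'} = m_z$, so $y \in {\cl O}(z)$ in $L({\bb F}_n)$ and $x \in {\bb E}_B({\cl O}(z))$.

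\medskip

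\noindent\textit{Main obstacle.} The one nontrivial verification is that the embedding $\phi$ intertwines the conditional expectations, ${\bb E}_{A_i} \circ \phi = \phi \circ {\bb E}_{A_i}$, which follows from $\phi$ being trace-preserving and the identity on $A_i$, together with uniqueness of the trace-preserving conditional expectation onto a masa. One must also confirm that $A_i$ survives as a masa after free extension (a standard consequence of freeness) and that the free product of two diffuse abelian tracial algebras really is $L({\bb F}_2)$ with the subalgebras appearing as generator masas. The author's remark about ``a countable set of elements'' is the observation that in more elaborate variants of the method, one may need to capture a whole sequence (rather than a single $y'$) inside a countably generated subalgebra, which remains possible since countable unions of countable sets are countable.
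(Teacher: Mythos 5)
Your proposal is correct and follows essentially the same route the paper intends: the authors omit the proof, saying only that one repeats the arguments of Theorems~\ref{thm3.1} and~\ref{thm3.3} with Theorem~\ref{thm5.3} in place of Lemma~\ref{lem2.1}, which is exactly what you do. You also correctly supply the two details the paper glosses over --- that membership in ${\cl O}(z)$ depends only on the spectral distribution (Lemma~\ref{lem5.2}) and so survives the trace-preserving pull-back $\phi$, and that in the radial case one should replace $z$ by an equidistributed $z'$ inside the copy of $L({\bb F}_2)$ generated by $B$ and $v_1v_2Bv_2v_1$.
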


\begin{cor}\label{cor5.5}
If $A$ is the Cartan masa in the hyperfinite factor $R$ and $x\in A^+$, $z\in R^+$ satisfy $x\preceq z$, then there is a trace preserving automorphism $\theta$ of $A$ so that $\theta(x) \in {\bb E}_A({\cl O}(z))$.
\end{cor}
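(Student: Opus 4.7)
The strategy mirrors the proof of Theorem~\ref{thm4.1}, with Theorem~\ref{thm5.3} replacing Lemma~\ref{lem2.1}. After scaling both $x$ and $z$ by a common positive constant, we may assume $0\le x\le 1$. By uniqueness of the Cartan masa in $R$, fix a trace-preserving isomorphism $\phi_1\colon R\to L^\infty(\bb{T})\rtimes\bb{Z}$ with $\phi_1(A)=L^\infty(\bb{T})$, where $\bb{Z}$ acts by irrational rotation. Embedding $\bb{Z}$ into the uncountable group $G$ constructed in Section~\ref{sec4} gives an inclusion $R\hookrightarrow M:=L^\infty(\bb{T})\rtimes G$; both $\phi_1(x)$ and $\phi_1(z)$ then sit in $M$ and still satisfy $\phi_1(x)\preceq\phi_1(z)$ there, since $\preceq$ depends only on the scalar spectral distributions, which are preserved by the trace-preserving embedding.

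Because $L^\infty(\bb{T})$ is separable while $M$ is nonseparable, the predual reformulation in Remark~\ref{rem2.3}(i) combined with a cardinality argument of the type used in Theorem~\ref{thm2.2} shows that $\Phi_e$ is noninjective for every nonzero projection $e\in M$. Thus Theorem~\ref{thm5.3} applies to $\phi_1(x)\preceq\phi_1(z)$ inside $M$, and the Krein--Milman theorem supplies an extreme point $b$ of $\Delta_{\phi_1(x),\phi_1(z)}$ which by Theorem~\ref{thm5.3} lies in $\cl{O}_M(\phi_1(z))$, while by construction ${\bb E}_{L^\infty(\bb{T})}(b)=\phi_1(x)$.

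To return to $R$, note that $b$ has at most countably many nonzero Fourier coefficients over $G$, so there is a countable subgroup $H\subseteq G$ containing $\bb{Z}$ and the support of $b$. Then $L^\infty(\bb{T})\rtimes H$ is a separable hyperfinite $\text{II}_1$ factor containing both $b$ and $\phi_1(z)$, with $L^\infty(\bb{T})$ as a Cartan subalgebra, and uniqueness of Cartan masas provides a trace-preserving isomorphism $\phi_2\colon L^\infty(\bb{T})\rtimes H\to R$ with $\phi_2(L^\infty(\bb{T}))=A$. Since unitary orbit closure in a $\text{II}_1$ factor is characterized by the spectral distribution (as noted after Theorem~\ref{thm5.1}), the relation $b\in\cl{O}_M(\phi_1(z))$ descends to $b\in\cl{O}_{L^\infty(\bb{T})\rtimes H}(\phi_1(z))$; applying $\phi_2$ then gives $y:=\phi_2(b)\in\cl{O}_R(\phi_2\phi_1(z))=\cl{O}_R(z)$, where the last equality again uses that $\phi_2\phi_1$ preserves spectral distributions. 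Setting $\theta=\phi_2\phi_1|_A$ produces a trace-preserving automorphism of $A$ with ${\bb E}_A(y)=\phi_2({\bb E}_{L^\infty(\bb{T})}(b))=\theta(x)$, as required. The only real subtlety is verifying that the orbit-closure containment transfers between factors along the trace-preserving embeddings, and this rests entirely on the Kamei--Arveson--Kadison characterization of $\cl{O}(\cdot)$ by spectral distribution.
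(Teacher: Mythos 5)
Your proposal is correct and follows essentially the route the paper intends: it runs the proof of Theorem~\ref{thm4.1} with Theorem~\ref{thm5.3} in place of Lemma~\ref{lem2.1}, passing to the nonseparable crossed product $L^\infty({\bb T})\rtimes G$, extracting an extreme point of $\Delta_{\phi_1(x),\phi_1(z)}$, and descending to a countable subgroup containing the supports of both the extreme point and $\phi_1(z)$ before invoking uniqueness of Cartan masas. The paper omits this argument precisely because it is the same adaptation, including your observation (via Lemma~\ref{lem5.2}) that membership in ${\cl O}(\cdot)$ transfers along trace-preserving inclusions since it depends only on spectral distributions.
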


In \cite{A-M0}, $\{x\colon \ x\preceq z\}$ was shown to be the $\sigma$-{\em
SOT} closure
of ${\bb E}_A({\cl O}(z))$ for general masas $A$. In the case of the Cartan
masa, we
can improve this to norm closure. We will need a simple preliminary lemma.

\begin{lem}\label{lem5.6}
Let $A$ be the Cartan masa in the hyperfinite factor $R$. Given two sets of
orthogonal  projections $\{p_1,\ldots,p_n\}$ and $\{q_1,\ldots,q_n\}$ in $A$
satisfying $\tau(p_i)=\tau(q_i)$, $1\leq i\leq n$, there exists a unitary
normalizer $u$ of $A$ so that $up_iu^*=q_i$, $1\leq i\leq n$.
\end{lem}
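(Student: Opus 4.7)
After introducing the remainders $p_{n+1}=1-\sum_{i=1}^n p_i$ and $q_{n+1}=1-\sum_{i=1}^n q_i$, which satisfy $\tau(p_{n+1})=\tau(q_{n+1})$, we may assume that $\{p_i\}_{i=1}^n$ and $\{q_i\}_{i=1}^n$ are partitions of unity in $A$. The strategy is to construct $u$ as a sum $u=\sum_{i=1}^n v_i$, where each $v_i\in R$ is a partial isometry with $v_i^*v_i=p_i$, $v_iv_i^*=q_i$, and $v_iAv_i^*\subseteq A$ (i.e., a groupoid normalizer of $A$).

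Granted the existence of such $v_i$, a direct calculation verifies that $u$ does the job. The orthogonality relations $v_j^*v_i=(p_jv_j^*)(q_iv_i)=v_j^*q_jq_iv_i=0$ and $v_iv_j^*=(v_ip_i)(p_jv_j^*)=v_ip_ip_jv_j^*=0$ for $i\ne j$ yield $u^*u=\sum_i p_i=1=\sum_i q_i=uu^*$, so $u$ is unitary. For $a\in A$ and $i\ne j$ one has $v_iav_j^*=v_iap_ip_jv_j^*=0$, hence $uAu^*=\sum_i v_iAv_i^*\subseteq A$; since $u$ is unitary this forces $uAu^*=A$, so $u\in N(A)$. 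Finally, $up_iu^*=\sum_{j,k}v_jp_iv_k^*$ collapses to the single term $v_ip_iv_i^*=v_iv_i^*=q_i$, as every other term contains a factor $p_ip_j$ with $i\ne j$.

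It remains to produce the partial isometries $v_i$. By uniqueness of the Cartan masa in $R$~\cite{CFW}, realize $A\subseteq R$ as the standard inclusion $L^\infty(X,\mu)\subseteq L^\infty(X,\mu)\rtimes\Gamma$ coming from a free ergodic measure-preserving action of a countable amenable group $\Gamma$ on a standard probability space $(X,\mu)$. Under this identification the projections $p_i,q_i$ correspond to Borel sets $P_i,Q_i\subseteq X$ with $\mu(P_i)=\mu(Q_i)$, and the sought $v_i$'s correspond to measure-preserving bijections $\phi_i\colon P_i\to Q_i$ whose graphs lie in the orbit equivalence relation ${\cl R}_\Gamma$. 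Since ${\cl R}_\Gamma$ is a type $\text{II}_1$ ergodic equivalence relation, a Zorn's lemma argument produces such $\phi_i$: ergodicity guarantees that any non-null subset of $P_i$ can be ${\cl R}_\Gamma$-moved into an unused portion of $Q_i$, so a maximal measure-preserving partial bijection $P_i\to Q_i$ with graph in ${\cl R}_\Gamma$ must exhaust $P_i$ modulo null sets.

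The main obstacle is precisely this transitivity of the full pseudogroup of ${\cl R}_\Gamma$ on Borel sets of equal measure, which plays the role of the matrix-level permutation argument from the classical Schur--Horn setting; it is classical in ergodic theory for type $\text{II}_1$ equivalence relations but requires a careful maximality argument based on ergodicity.
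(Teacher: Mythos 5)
Your proof is correct, but it follows a genuinely different route from the paper's. The paper argues by induction on $n$: the single--projection case $n=1$ is quoted from Popa \cite{Po2} (see also \cite[Lemma 6.2.6]{SS}), and the inductive step first conjugates by a normalizer $u_1$ carrying $p_1$ to $q_1$, then applies the induction hypothesis inside the compression $(1-p_1)R(1-p_1)$, where $A(1-p_1)$ is again Cartan, and extends the resulting normalizer $w$ to $v=w+p_1$, finishing with $u=u_1v$. You instead build $u$ in one step as a sum $\sum_i v_i$ of groupoid normalizers with $v_i^*v_i=p_i$ and $v_iv_i^*=q_i$, obtaining the $v_i$ from the Feldman--Moore/orbit-equivalence model of the Cartan inclusion and the transitivity of the full pseudogroup of an ergodic type $\mathrm{II}_1$ equivalence relation on Borel sets of equal measure; your algebraic verification that $u$ is a unitary normalizer carrying $p_i$ to $q_i$ is complete and correct (including the observation that $uAu^*\subseteq A$ forces equality, since $uAu^*$ is a masa contained in the abelian algebra $A$). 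The trade-off: the paper's induction is shorter because it outsources all the ergodic-theoretic content to the cited $n=1$ result and only needs the elementary fact that a Cartan masa compresses to a Cartan masa; your argument is essentially self-contained modulo the CFW/Feldman--Moore realization, makes the underlying groupoid structure explicit, and handles all $n$ simultaneously --- at the cost of having to carry out the exhaustion/maximality argument (which, as you note, should be done by a maximizing sequence rather than a naive Zorn argument on a measure algebra) that Popa's lemma already encapsulates.
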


\begin{proof}
We proceed by induction on the number $n$ of projections. The case $n=1$ is
proved in \cite{Po2} (see also \cite[Lemma 6.2.6]{SS}), so suppose that the
result is true for $n-1$ projections.
Choose a
unitary normalizer $u_1$ of $A$ so that $u_1p_1u_1^*=q_1$, and consider the
sets of projections $\{ p_1,\ldots,p_n\}$ and
$\{u_1^*q_1u_1,\ldots,u_1^*q_nu_1\}$. Since $A(1-p_1)$ is a Cartan masa in
$(1-p_1)R(1-p_1)$, we may apply the induction hypothesis to the sets of
projections
$\{ p_2,\ldots,p_n\}$ and
$\{u_1^*q_2u_1,\ldots,u_1^*q_nu_1\}$ in $A(1-p_1)$ to obtain a unitary
normalizer $w\in (1-p_1)R(1-p_1)$ of $A(1-p_1)$ so that $wp_iw^*=u_1^*q_iu_1$
for $2\leq i\leq n$. This extends to a unitary normalizer $v=w+p_1$ of $A$ in
$R$. The proof is completed by defining $u$ to be $u_1v$, so that $up_iu^*=q_i$
for $1\leq i\leq n$.
\end{proof}

\begin{cor}\label{cor5.6}
Let $A$ be the Cartan masa in the hyperfinite factor $R$. If $x\in A^+$, $z\in R^+$ and $x\preceq z$, then $x\in \ovl{{\bb E}_A({\cl O}(z))}$ (norm closure).
\end{cor}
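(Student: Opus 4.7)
The plan is to combine Corollary~\ref{cor5.5} with Lemma~\ref{lem5.6} to replace the trace-preserving automorphism of $A$ produced by the former with an inner automorphism implemented by a unitary normalizer of $A$, up to an arbitrarily small norm error. Corollary~\ref{cor5.5} supplies a trace-preserving automorphism $\theta$ of $A$ and an element $y\in\cl O(z)$ with $\theta(x)={\bb E}_A(y)$, so the task reduces to showing that $x$ lies in the norm closure of the set of elements of the form ${\bb E}_A(uyu^*)$ as $u$ varies over unitary normalizers of $A$ in $R$.

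Fix $\vp>0$. Since $A$ is abelian, I would first approximate $x$ uniformly by a step function $x'=\sum_{i=1}^n\lambda_i p_i$, where $p_1,\ldots,p_n$ are pairwise orthogonal projections in $A$ summing to $1$, $\lambda_i\in{\bb R}$, and $\|x-x'\|<\vp/2$. Since any $*$-automorphism of a $C^*$-algebra is isometric, $\theta(x')=\sum_i\lambda_i\theta(p_i)$ also satisfies $\|\theta(x)-\theta(x')\|<\vp/2$, and $\{\theta(p_1),\ldots,\theta(p_n)\}$ is an orthogonal family of projections in $A$ with $\tau(\theta(p_i))=\tau(p_i)$. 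Lemma~\ref{lem5.6} then produces a unitary normalizer $u$ of $A$ in $R$ with $u\theta(p_i)u^*=p_i$ for $1\le i\le n$, so $u\theta(x')u^*=x'$, giving
\[
\|u\theta(x)u^*-x\| \le \|u(\theta(x)-\theta(x'))u^*\| + \|x'-x\| < \vp.
\]

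To close, I would invoke the identity ${\bb E}_A(uwu^*)=u{\bb E}_A(w)u^*$, valid for every $w\in R$ and every unitary normalizer $u$ of $A$ by uniqueness of the trace-preserving conditional expectation. Together with the observation that $uyu^*\in\cl O(z)$ (since conjugation by $u$ preserves spectral distributions), this yields $u\theta(x)u^*=u{\bb E}_A(y)u^*={\bb E}_A(uyu^*)\in{\bb E}_A(\cl O(z))$, placing $x$ within $\vp$ of ${\bb E}_A(\cl O(z))$. There is essentially no obstacle here: every needed ingredient is already in place, and the only conceptual point is that a norm step-function approximation of $x$ reduces the matching problem to finitely many projections, at which level Lemma~\ref{lem5.6} cancels $\theta$ exactly.
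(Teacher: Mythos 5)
Your proof is correct and takes essentially the same route as the paper: approximate $x$ in norm by a step function $\sum_i\lambda_ip_i$, use Lemma~\ref{lem5.6} to undo $\theta$ on the finitely many projections via a unitary normalizer $u$, and move $u$ through ${\bb E}_A$ by uniqueness of the trace-preserving conditional expectation. The only cosmetic difference is that you conjugate a single orbit element $y\in{\cl O}(z)$ directly (noting $uyu^*\in{\cl O}(z)$), whereas the paper works with an approximating sequence of unitaries $v_n$ and a $\limsup$.
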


\begin{proof}
By Corollary \ref{cor5.5} there exists a trace preserving automorphism $\theta$
of $A$ so that $\theta(x)\in {\bb E}_A({\cl O}(z))$. Given $\vp>0$, there exist
projections $p_i\in A$, $1\le i\le n$ and positive constants $\lambda_i$, $1\le
i\le n$, so that
\begin{equation}\label{eq5.16}
\left\|x - \sum^n_{i=1} \lambda_ip_i\right\|<\vp
\end{equation}
and such that $\sum\limits^n_{i=1} p_i=1$. Since $\tau(p_i)=\tau(\theta(p_i))$,
Lemma~\ref{lem5.6} gives a  unitary normalizer $u$ of $A$ satisfying
\begin{equation}\label{eq5.17}
u^*p_iu = \theta(p_i),\qquad 1\le i\le n.
\end{equation}
Choose unitaries $v_n\in R$ such that
\begin{equation}\label{eq5.18}
\lim_{n\to\infty} \|\theta(x) -{\bb E}_A(v_nzv^*_n)\|=0.
\end{equation}
From \eqref{eq5.16}, \eqref{eq5.17} and \eqref{eq5.18} it follows that
\begin{equation}\label{eq5.19}
\limsup_{n\to\infty} \|x-u {\bb E}_A(v_nzv^*_n)u^*\|\le \vp.
\end{equation}
But uniqueness of the conditional expectation gives $u{\bb E}_A(v_nzv^*_n)u^* =
{\bb E}_A(uv_nzv^*_nu^*)$, and the result follows from \eqref{eq5.19}.
\end{proof}

\noindent
{\bf Update:}
After circulating a version of this preprint, we learned of independent
work of C.\ Akemann, D.\ Sherman and N.\ Weaver that may overlap
with some of our results.

\section*{Contact Information}

\noindent Kenneth J. Dykema

\noindent Department of Mathematics, Texas A\&M University,

\noindent College Station, TX 77843

\noindent {\em E-mail address: } [Dykema]\,\,
kdykema\@@math.tamu.edu

\vspace{2ex}

\noindent Junsheng Fang

\noindent School of Mathematical Sciences,

\noindent Dalian University of Technology,

\noindent  Dalian, China

\noindent {\em E-mail address: } [Fang]\,\,
junshengfang@gmail.com

\vspace{2ex}

\noindent Donald W. Hadwin

\noindent Department of Mathematics, University of New Hampshire,

\noindent Durham NH 03824

\noindent {\em E-mail address: } [Hadwin]\,\,
don@math.unh.edu

\vspace{2ex}

\noindent Roger R. Smith

\noindent Department of Mathematics, Texas A\&M University,

\noindent College Station, TX 77843

\noindent {\em E-mail address: } [Smith]\,\,
rsmith\@@math.tamu.edu

\end{document}